\newtheorem{theorem}{Theorem}[section]
\newtheorem{lemma}[theorem]{Lemma}
\newtheorem{corollary}[theorem]{Corollary}
\theoremstyle{definition}
\newtheorem{remark}[theorem]{Remark}
\theoremstyle{remark}
\numberwithin{equation}{section}
\begin{document}
	
	\title[Completeness of All Translates of a Function in the Orlicz Spaces]
	{A Note on the Completeness of All Translates of a Function in the Orlicz Spaces}
	
	\author[Bhawna Dharra]{Bhawna Dharra}
	\address{
		Bhawna Dharra:
		\endgraf
		Department of Mathematics
		\endgraf
		Indian Institute of Technology, Delhi, Hauz Khas
		\endgraf
		New Delhi-110016 
		\endgraf
		India
	}
	\email{bhawna.dharra@gmail.com}
	
	\author[S. Sivananthan]{S. Sivananthan}
	\address{
		S. Sivananthan:
		\endgraf
		Department of Mathematics
		\endgraf
		Indian Institute of Technology, Delhi, Hauz Khas
		\endgraf
		New Delhi-110016 
		\endgraf
		India
	}
	\email{siva@maths.iitd.ac.in}

	\subjclass[2020]{Primary 46E30; Secondary 42A65, 42C30}
	\keywords{Completeness, Translates, Orlicz space, Discrete}
	\date{}
	
	\begin{abstract}
		We give a characterization of those functions whose all translates are complete in certain Orlicz space $L^{\Phi}(\mathbb{R})$. As a consequence, we identified those discrete sets $\Lambda \subseteq \mathbb{R}$ such that there exists a function in $L^{\Phi}(\mathbb{R})$ whose $\Lambda$-translates are complete. We then prove the completeness of all translates of any simple step function in other Orlicz spaces. 
	\end{abstract}
	\maketitle

	
	\section{Introduction}
	
	Orlicz spaces offer a natural generalization of the classical Lebesgue spaces. They are formed using Orlicz functions, which are an extension of the properties of the function $x^p$ corresponding to the $L^p$-spaces. The extensiveness of Orlicz spaces gives rise to their several significant properties yielding fundamental applications in Fourier analysis, Stochastic analysis, Sobolev space embeddings, non-linear PDE and more \cite{raorenapplicationbook}.
	We are concerned with the completeness of the systems which are generated using all translates of a single function in the Orlicz space, i.e., for $f \in L^{\Phi}(\mathbb{R})$, when does 
	\begin{equation*}
		\overline{	\mbox{span }\{ \tau_{\lambda}f(\cdot):=f(\cdot - \lambda )\}_{\lambda \in \mathbb{R}} }~~=~~L^{\Phi}(\mathbb{R})?\\
	\end{equation*}
	For a survey of results on the completeness of the system of translates in the classical Lebesgue spaces, we refer to \cite[Chapter 11,12]{olevskiiulanovskii18}. 
	
	The classical Wiener-Tauberian theorem \cite{wiener32} characterizes such functions for the special case of $L^1(\mathbb{R})$ and $L^2(\mathbb{R})$.  It is proved that all translates of a function $f$ are complete in $L^1(\mathbb{R})$ (and in $L^2(\mathbb{R})$) precisely when its Fourier transform $\hat{f}$ does not vanish anywhere (and does not vanish $a.e.$ respectively). However, characterization of such functions in terms of the zero set of their Fourier transform is not always possible, as shown by Lev and Olevskii \cite{levolevskii11} for the spaces $L^p(\mathbb{R})$ when $1<p<2$. 
	
	Since the Orlicz spaces offer a bigger class of function spaces than the Lebesgue spaces, one can ask whether there are Orlicz spaces other than $L^1(\mathbb{R})$ and $L^2(\mathbb{R})$ for which we can characterize the functions whose all translates are complete in terms of the zero set of the Fourier transform. We proved that for Orlicz spaces continuously embedded in the space $L^1(\mathbb{R})$, such a characterization exists. As a result, we are able to characterize those discrete sets $\Lambda \subset \mathbb{R}$ such that the system of $\Lambda$-translates of some function is complete in these Orlicz spaces. The spaces for which such a characterization is known are $L^1(\mathbb{R})$ \cite{brunaolevskii2006}, some weighted $L^1_w(\mathbb{R})$ \cite{blank2006} spaces and the Hardy space $H^1(\mathbb{R})$ \cite{bhawna22} only.  
	
	Moreover, rather than looking for characterizing of these functions, one can also ask whether all translates of a particular function are able to span a dense set for the whole space or not. Agnew \cite{agnew1945I} proved that whenever you take any simple step function of the form $\chi_{[0,h]}$ for some $h>0$, then definitely its all translates are complete in $L^p(\mathbb{R})$ for every $p>1$. However, Wiener's theorem ensures that it is not true for $p=1$.  Inspired by the variation in the above result in the cases when $p=1$ and $p>1$, we look for conditions on the Orlicz function $\Phi$ so that all translates of any simple step function are complete in $L^{\Phi}(\mathbb{R})$ or not.

	
	\section{Preliminaries}\label{Preliminaries}
	
	An \textit{Orlicz function} is defined to be a convex function $\Phi:[0,\infty)\to [0,\infty]$ such that $\Phi(0)=0$ and $\lim\limits_{x \to \infty}\Phi(x)=\infty$. For a given Orlicz function $\Phi$ and a measurable set $\Omega \subseteq \mathbb{R}$, the \textit{Orlicz space} $L^{\Phi}(\Omega)$ is a Banach space defined as
	$$L^{\Phi}(\Omega):=\left\{f: f \text{\small{ is measurable on }}\Omega, \int\limits_{\Omega} \Phi\left(\frac{|f(x)|}{a}\right)dx < \infty \text{\small{ for some }}a>0\right\},$$
	equipped with the \textit{Luxemburg norm}
	$$\|f\|_{L^{\Phi}(\Omega)}=\inf\left\{ a > 0 ~:~ \int\limits_{\Omega} \Phi\left(\frac{|f(x)|}{a}\right)dx \leq 1 \right\},$$
	where we identify functions which are equal $a.e.$ Note that the Orlicz spaces corresponding to $\Phi(x)=x^p$ for $1 \leq p < \infty$ gives the classical Lebesgue spaces. Also,
	$L^1(\mathbb{R})*L^{\Phi_1}(\mathbb{R}) \subseteq L^{\Phi_1}(\mathbb{R}) $ for any Orlicz function $\Phi_1$. 
	
	A \textit{complementary Orlicz function to} a given Orlicz function $\Phi$ is defined by 
	$$\Psi(y)=\sup \{xy-\Phi(x)~:~x \geq 0\}, ~~y \geq 0.$$
	
	We have a generalized Young's inequality for a pair of complementary Orlicz functions $(\Phi,\Psi)$,
	$$xy \leq \Phi(x)+\Psi(y), ~~x,y \geq 0,$$
	where the equality is achieved if and only if either $y=\Phi^{\prime}(x)$ or $x=\Psi^{\prime}(y)$. As a consequence, we have the \textit{H\"older's inequality}: If $f \in L^{\Phi}(\Omega)$ and $g \in L^{\Psi}(\Omega)$, then $fg \in L^1(\Omega)$ and
	$$\int\limits_{\Omega}|fg|dx \leq 2 \|f\|_{L^{\Phi}(\Omega)}\|g\|_{L^{\Psi}(\Omega)}.$$
	With its help, a norm equivalent to $\|\cdot\|_{L^{\Phi}(\Omega)}$ can be defined on the Orlicz space $L^{\Phi}(\Omega)$, called the Orlicz norm
	$$\|f\|_{L^{(\Phi)}(\Omega)}:=\sup \left( \int\limits_{\Omega}|fg|dx ~~:~~ g \in L^{\Psi}(\Omega)\text{ with }\int\limits_{\Omega}\Psi(|g|)dx \leq 1 \right).$$
	The following equivalence relation between the Orlicz and the Luxemburg norm holds
	$$\|f\|_{L^{\Phi}(\Omega)} \leq \|f\|_{L^{(\Phi)}(\Omega)} \leq 2\|f\|_{L^{\Phi}(\Omega)} ~~ \text{ for all } f \in {L^{\Phi}(\Omega)}.$$

	We will work with a special yet wide class of Orlicz functions. We say $\Phi \in \Delta_2$ if there exists a constant $D>0$ such that 
	$$\Phi(2x) \leq D\Phi(x) \text{ for all }x \geq 0.$$

	The dual space $(L^{\Phi})^{*}$ of the Orlicz space $L^{\Phi}$ has the following characterization whenever $\Phi \in \Delta_2$, 
	\begin{theorem}\label{duality_theorem}
		Let $(\Phi,\Psi)$ be a pair of complementary Orlicz functions such that $\Phi \in \Delta_2$. Then $(L^{\Phi})^{*}=L^{\Psi}$ and for each $f^{*} \in (L^{\Phi})^{*}$, there exists a unique $g \in L^{\Psi}$ such that 
		$$f^{*}(f) = \int fg dx ,~~f \in L^{\Phi}.$$
		In particular, the Orlicz space $L^{\Phi}(\Omega)$ is reflexive if and only if $\Phi , \Psi \in \Delta_2$.
	\end{theorem}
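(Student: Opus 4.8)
\emph{Proof plan.} The plan is to establish the duality $(L^{\Phi})^{\ast}=L^{\Psi}$ first and then read off the reflexivity dichotomy. One direction is immediate: for $g\in L^{\Psi}$ the functional $T_{g}(f)=\int fg\,dx$ is well defined and bounded on $L^{\Phi}$ by the H\"older inequality of Section~\ref{Preliminaries}, and in fact $\|T_{g}\|$ equals the Orlicz norm $\|g\|_{L^{(\Psi)}}$ by the very definition of the latter; so $g\mapsto T_{g}$ embeds $L^{\Psi}$ isometrically (for the Orlicz norm) into $(L^{\Phi})^{\ast}$. The substance of the theorem is that this map is onto when $\Phi\in\Delta_{2}$. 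Before the main argument I would record the two standard consequences of $\Delta_{2}$ that everything rests on: (i) bounded functions supported on sets of finite measure --- in particular simple functions --- are dense in $L^{\Phi}$; and (ii) the $L^{\Phi}$-norm is absolutely continuous, i.e.\ $\|f\chi_{E_{n}}\|_{L^{\Phi}}\to 0$ whenever $f\in L^{\Phi}$ and $E_{n}\downarrow\emptyset$. Both come from the fact that, under $\Delta_{2}$, norm convergence is equivalent to modular convergence $\int\Phi(|f_{n}-f|)\,dx\to 0$.

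For surjectivity, fix $F\in(L^{\Phi})^{\ast}$ and a $\sigma$-finite exhaustion $\Omega=\bigcup_{k}\Omega_{k}$ with $\Omega_{k}\uparrow$ and $|\Omega_{k}|<\infty$. On the measurable subsets of a fixed $\Omega_{k}$, the set function $\nu(E):=F(\chi_{E})$ is finitely additive; it is countably additive because, by (ii), $\chi_{\bigsqcup_{j\le n}E_{j}}\to\chi_{E}$ in $L^{\Phi}$ for a disjoint union $E=\bigsqcup_{j}E_{j}$; and it vanishes on Lebesgue-null sets since then $\chi_{E}=0$ in $L^{\Phi}$. Hence $\nu\ll\mathrm{Leb}$ and the Radon--Nikodym theorem gives $g_{k}\in L^{1}(\Omega_{k})$ with $F(\chi_{E})=\int_{E}g_{k}\,dx$ for all $E\subseteq\Omega_{k}$. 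By uniqueness the $g_{k}$ agree on overlaps and define a single locally integrable $g$ on $\Omega$, and by linearity $F(s)=\int sg\,dx$ for every simple $s$ supported in some $\Omega_{k}$.

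It remains to show $g\in L^{\Psi}$, after which (i) extends $F=T_{g}$ to all of $L^{\Phi}$ and uniqueness of $g$ follows by testing against indicators. I would truncate: set $A_{n}=\{x\in\Omega_{n}:|g(x)|\le n\}$ and $g_{n}=g\chi_{A_{n}}$, so $g_{n}$ is bounded with support of finite measure, hence $g_{n}\in L^{\Psi}$, and $|g_{n}|\uparrow|g|$ a.e. For a nonnegative simple $h$ with $\int\Phi(h)\,dx\le 1$, the function $(\mathrm{sgn}\,g_{n})\,h\,\chi_{A_{n}}$ is simple, supported in $\Omega_{n}$, and has $L^{\Phi}$-norm at most $1$, so $\int h\,|g_{n}|\,dx=F\big((\mathrm{sgn}\,g_{n})\,h\,\chi_{A_{n}}\big)\le\|F\|$. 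Taking the supremum over such $h$ (legitimate by (i)) gives $\|g_{n}\|_{L^{(\Psi)}}\le\|F\|$ for every $n$, and the monotone convergence theorem applied to $\int h\,|g_{n}|\,dx$ then yields $g\in L^{\Psi}$ with $\|g\|_{L^{(\Psi)}}\le\|F\|$. This proves $(L^{\Phi})^{\ast}=L^{\Psi}$.

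For the reflexivity statement, observe that under the standing hypothesis $\Phi\in\Delta_{2}$ the condition ``$\Phi,\Psi\in\Delta_{2}$'' reduces to ``$\Psi\in\Delta_{2}$''. If $\Psi\in\Delta_{2}$, then $\Phi$ is the complementary function of $\Psi$, so the duality just proved, applied to the pair $(\Psi,\Phi)$, gives $(L^{\Psi})^{\ast}=L^{\Phi}$; hence $(L^{\Phi})^{\ast\ast}=L^{\Phi}$, and a check shows the resulting composite is the canonical embedding $f\mapsto(g\mapsto\int fg\,dx)$, which is therefore onto --- so $L^{\Phi}$ is reflexive. Conversely, if $\Psi\notin\Delta_{2}$, I would exhibit inside $L^{\Psi}=(L^{\Phi})^{\ast}$ a closed subspace isomorphic to $c_{0}$: exploiting the failure of $\Delta_{2}$ one picks $x_{k}\to\infty$ along which $\Psi$ grows faster than any geometric rate and constructs functions $u_{k}$ with pairwise disjoint supports of rapidly shrinking measure for which $\big\|\textstyle\sum_{k}a_{k}u_{k}\big\|_{L^{\Psi}}$ is comparable to $\sup_{k}|a_{k}|$. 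Since a closed subspace of a reflexive space is reflexive whereas $c_{0}$ is not, $(L^{\Phi})^{\ast}$ --- and hence $L^{\Phi}$ --- is not reflexive. Every step but this last $c_{0}$-construction is a routine assembly of H\"older's inequality, Radon--Nikodym, and the $\Delta_{2}$-consequences (i)--(ii); the construction is the one genuinely delicate point, which is where I would expect the main effort to go.
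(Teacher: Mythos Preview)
The paper does not prove this theorem at all: it is stated in Section~\ref{Preliminaries} as a standard preliminary fact about Orlicz spaces, with the references \cite{raorenapplicationbook,book-foundations-of-symmetric-spaces} implicitly covering it, and no proof is given. So there is no ``paper's own proof'' against which to compare your proposal.

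That said, your outline is the classical textbook argument (H\"older for the embedding $L^{\Psi}\hookrightarrow(L^{\Phi})^{\ast}$, Radon--Nikodym for surjectivity, truncation plus Fatou/monotone convergence for the norm bound, and then iteration of duality for reflexivity), and it is essentially correct. The only genuinely nontrivial ingredient you flag yourself: when $\Psi\notin\Delta_{2}$, producing a copy of $c_{0}$ (or $\ell^{\infty}$) inside $L^{\Psi}$ to obstruct reflexivity. One small caveat on your truncation step: the function $(\mathrm{sgn}\,g_{n})\,h\,\chi_{A_{n}}$ need not literally be simple since $\mathrm{sgn}\,g_{n}$ may be complex-valued and non-simple; you should instead approximate $\mathrm{sgn}\,g_{n}$ by simple functions, or argue directly that bounded functions of finite-measure support lie in $L^{\Phi}$ and that $F$ agrees with $\int(\cdot)g\,dx$ on all of them, not just simple ones. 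This is a routine fix.
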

	
	Also, the $\Delta_2$ condition also depicts the separability of the Orlicz space.
	\begin{theorem}\label{separability}
		For an Orlicz function $\Phi$, the space $L^{\Phi}(\Omega)$ is separable if and only if $\Phi \in \Delta_2$.
	\end{theorem}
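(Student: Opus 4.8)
The plan is to prove the two implications separately, in each case by pinning down an explicit witness: a countable norm-dense subset for the sufficiency, and an uncountable $\tfrac12$-separated family for the necessity. In both directions the lever is the modular $\rho_\Phi(f):=\int_\Omega\Phi(\abs f)\,dx$ and its relation to the Luxemburg norm, and $\Delta_2$ is precisely what makes that relation usable. Iterating $\Phi(2x)\le D\Phi(x)$ gives $\rho_\Phi(2^k g)\le D^k\rho_\Phi(g)$ for all $k\in\mathbb N$, so $\rho_\Phi(g_n)\to0$ forces $\|g_n\|_{L^\Phi}\to0$: for $\varepsilon\in(0,1)$ pick $k$ with $2^k\ge1/\varepsilon$, then $\rho_\Phi(g_n/\varepsilon)\le D^k\rho_\Phi(g_n)\le1$ for large $n$, i.e.\ $\|g_n\|_{L^\Phi}\le\varepsilon$. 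The same iteration shows $\rho_\Phi(\lambda f)<\infty$ for every $\lambda>0$ once $f\in L^\Phi$; in particular $\Phi(\abs f)\in L^1(\Omega)$.

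\emph{Sufficiency.} Assume $\Phi\in\Delta_2$. I would build the dense set through a short chain of approximations, each one small in the modular and then transferred to the norm by the remark above. First, truncating $f\in L^\Phi$ to the set $\{\abs f\le k\}$ and to $\Omega\cap[-k,k]$ gives a bounded, boundedly supported $f_k$ with $\rho_\Phi(f-f_k)\to0$ by dominated convergence, since $0\le\Phi(\abs{f-f_k})\le\Phi(\abs f)\in L^1(\Omega)$ and $\Phi(\abs{f-f_k})\to0$ a.e. Second, a function $g$ with $\abs g\le M$ vanishing off a set $E$ of finite measure satisfies $\rho_\Phi(g)\le\abs E\,\Phi(M)$, so such a function is approximable, uniformly and hence in $\rho_\Phi$, by a simple function. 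Third, the level sets of a simple function are approximated in measure by finite unions of intervals with rational endpoints (inner/outer regularity of Lebesgue measure) and its values by rationals, continuity of $\Phi$ at $0$ again keeping the $\rho_\Phi$-error small. The countable family of ``rational step functions'' so obtained is norm-dense, so $L^\Phi(\Omega)$ is separable.

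\emph{Necessity.} Suppose $\Phi\notin\Delta_2$. Then for each $m\in\mathbb N$ one can choose $t_m>0$ with $0<\Phi(t_m)<\infty$ and $\Phi(2t_m)>2^m\Phi(t_m)$: if $\Phi$ vanishes up to a threshold $x_0>0$, take $t_m\downarrow x_0$; otherwise $\sup_{x>0}\Phi(2x)/\Phi(x)=\infty$ yields such $t_m$ directly. Pick pairwise disjoint measurable $E_m\subseteq\Omega$ with $\abs{E_m}=\bigl(2^m\Phi(t_m)\bigr)^{-1}$ (possible since $\Omega$ has infinite measure; the finite-measure form of the statement is to be read with the usual convention on $\Delta_2$), and for $S\subseteq\mathbb N$ put $f_S:=\sum_{m\in S}t_m\chi_{E_m}$. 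Then $\rho_\Phi(f_S)=\sum_{m\in S}2^{-m}\le1$, so $f_S\in L^\Phi(\Omega)$ with $\|f_S\|_{L^\Phi}\le1$. If $S\ne T$ and $m\in S\triangle T$, then $\abs{f_S-f_T}\ge t_m\chi_{E_m}$ pointwise, while for any $a<\tfrac12$ one has $\int_\Omega\Phi\bigl(t_m\chi_{E_m}/a\bigr)\,dx\ge\Phi(2t_m)\abs{E_m}>1$; hence $\|f_S-f_T\|_{L^\Phi}\ge\|t_m\chi_{E_m}\|_{L^\Phi}\ge\tfrac12$. An uncountable $\tfrac12$-separated subset of $L^\Phi(\Omega)$ is incompatible with separability. (If $\Phi$ attains the value $+\infty$, the same disjoint-support construction works with heights just below the jump, mimicking the non-separability of $L^\infty$.)

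The delicate part is the sufficiency: it is a tower of approximations that are individually only modular-small, and what keeps it from collapsing is exactly the licence — granted by $\Delta_2$ — to dilate the argument of $\Phi$ at bounded cost, the very feature whose absence drives the counterexample above. Everything else (dominated convergence for the modular, regularity of Lebesgue measure, the disjointification of the $E_m$) is routine, and since the theorem is classical I would cross-check the construction against \cite{raorenapplicationbook}.
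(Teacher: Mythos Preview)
The paper does not prove this theorem: it appears in the Preliminaries section as a quoted background fact, with no argument given and with the monographs \cite{raorenapplicationbook} and \cite{book-foundations-of-symmetric-spaces} as the implicit sources. There is therefore no ``paper's own proof'' to compare against.

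Your argument is the standard textbook one and is essentially correct. The sufficiency direction properly isolates the role of $\Delta_2$ as the device that converts modular smallness $\rho_\Phi(g_n)\to 0$ into norm smallness $\|g_n\|_{L^\Phi}\to 0$, after which the chain truncation $\to$ simple functions $\to$ rational step functions goes through by dominated convergence and Lebesgue regularity. The necessity direction via an uncountable $\tfrac12$-separated family $\{f_S\}_{S\subseteq\mathbb N}$ is also the classical construction; your estimate $\|t_m\chi_{E_m}\|_{L^\Phi}\ge\tfrac12$ is correct since $\abs{E_m}\Phi(2t_m)>1$ and $\Phi$ is non-decreasing, and monotonicity of the Luxemburg norm passes this to $\|f_S-f_T\|_{L^\Phi}$.

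One caveat worth flagging: your disjoint sets $E_m$ of prescribed measure require $\abs\Omega=\infty$, which you note parenthetically. The paper's statement is for a general measurable $\Omega\subseteq\mathbb R$ under the \emph{global} $\Delta_2$ condition, and as written this is not quite the sharp equivalence when $\abs\Omega<\infty$ (there the correct condition is $\Delta_2$ near infinity). Since the paper only ever applies the result with $\Omega=\mathbb R$, this is a harmless imprecision in the stated preliminary rather than a flaw in your proof.
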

	
	The intersection of two Orlicz spaces is again an Orlicz space:
	\begin{theorem}\label{intersection_orlicz_function} \cite[Proposition ~17.1.1]{book-foundations-of-symmetric-spaces}
		Let $\Phi_1, \Phi_2$ be two Orlicz functions. Define the function
		$$\Phi_1 \vee \Phi_2 := \max (\Phi_1, \Phi_2).$$
		It is again an Orlicz function and 
		$$L^{\Phi_1 \vee \Phi_2}(\Omega) = L^{\Phi_1}(\Omega) \cap L^{\Phi_2}(\Omega).$$
	\end{theorem}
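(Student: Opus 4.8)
The plan is to verify the two assertions of the theorem separately, both of which reduce to elementary monotonicity together with the definition of the Luxemburg modular; no deep input is needed.

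First I would check that $\Phi_1 \vee \Phi_2$ is again an Orlicz function. Convexity passes to the pointwise maximum: for $t\in[0,1]$, $x = (1-t)x_0 + tx_1$ and each $i\in\{1,2\}$ we have $\Phi_i(x) \le (1-t)\Phi_i(x_0) + t\Phi_i(x_1) \le (1-t)(\Phi_1\vee\Phi_2)(x_0) + t(\Phi_1\vee\Phi_2)(x_1)$, and taking the maximum over $i$ on the left yields convexity of $\Phi_1\vee\Phi_2$. The normalization $(\Phi_1\vee\Phi_2)(0) = \max(0,0) = 0$ is immediate, and $\lim_{x\to\infty}(\Phi_1\vee\Phi_2)(x) = \infty$ since $\Phi_1\vee\Phi_2 \ge \Phi_1$; monotonicity on $[0,\infty)$ likewise passes to the max. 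So $\Phi_1\vee\Phi_2$ qualifies as an Orlicz function.

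Next I would establish the set equality by proving both inclusions. For $L^{\Phi_1\vee\Phi_2}(\Omega) \subseteq L^{\Phi_1}(\Omega)\cap L^{\Phi_2}(\Omega)$: if $f\in L^{\Phi_1\vee\Phi_2}(\Omega)$, pick $a>0$ with $\int_\Omega (\Phi_1\vee\Phi_2)(|f|/a)\,dx < \infty$; since $\Phi_i \le \Phi_1\vee\Phi_2$ pointwise, the same scalar $a$ shows $\int_\Omega \Phi_i(|f|/a)\,dx < \infty$ for $i=1,2$, hence $f$ lies in both $L^{\Phi_1}(\Omega)$ and $L^{\Phi_2}(\Omega)$. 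For the reverse inclusion: given $f\in L^{\Phi_1}(\Omega)\cap L^{\Phi_2}(\Omega)$, choose $a_1,a_2>0$ with $\int_\Omega \Phi_i(|f|/a_i)\,dx < \infty$ and set $a := \max(a_1,a_2)$. Monotonicity of each $\Phi_i$ gives $\Phi_i(|f|/a) \le \Phi_i(|f|/a_i)$, so
\[
(\Phi_1\vee\Phi_2)(|f|/a) \le \Phi_1(|f|/a) + \Phi_2(|f|/a) \le \Phi_1(|f|/a_1) + \Phi_2(|f|/a_2),
\]
which is integrable over $\Omega$; hence $f\in L^{\Phi_1\vee\Phi_2}(\Omega)$.

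There is essentially no genuine obstacle here; the only point deserving a moment's attention is that the scalar witnessing membership in an Orlicz space generally differs for $\Phi_1$ and $\Phi_2$, which is precisely why one passes to $a = \max(a_1,a_2)$ and invokes monotonicity before bounding the max by the sum. If one wanted a quantitative statement rather than mere set equality, the same computation (after normalizing the $a_i$ to the respective norms) yields $\|f\|_{L^{\Phi_1\vee\Phi_2}} \le \|f\|_{L^{\Phi_1}} + \|f\|_{L^{\Phi_2}}$, while the trivial bound $\|f\|_{L^{\Phi_i}} \le \|f\|_{L^{\Phi_1\vee\Phi_2}}$ gives the opposite comparison, so the $\Phi_1\vee\Phi_2$-norm is equivalent to the natural intersection norm; but this refinement is not required for the statement as given.
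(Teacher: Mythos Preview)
Your argument is correct. The paper does not supply its own proof of this theorem; it is quoted verbatim as \cite[Proposition~17.1.1]{book-foundations-of-symmetric-spaces} and used as background in the preliminaries, so there is no in-paper argument to compare against. What you have written is exactly the standard verification one finds in the cited reference: convexity, the value at $0$, and the growth at infinity all pass to the pointwise maximum, and the two set inclusions follow from the pointwise bounds $\Phi_i \le \Phi_1\vee\Phi_2 \le \Phi_1+\Phi_2$ together with monotonicity (to reconcile the two possibly different scalars $a_1,a_2$). Your closing remark on the equivalence of the $\Phi_1\vee\Phi_2$-norm with the intersection norm $\max(\|\cdot\|_{L^{\Phi_1}},\|\cdot\|_{L^{\Phi_2}})$ is also correct and is the form in which the result is often stated.
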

	
	We can define some partial order for the class of Orlicz functions. For given two Orlicz functions $\Phi_1$ and $\Phi_2$, we have the following.
	\begin{enumerate}
		\item \cite[p.~207]{book-foundations-of-symmetric-spaces}
		We say that $\Phi_1$ \textbf{majorizes} $\Phi_2$ at $0$, denoted by ${\Phi_2 \prec_0 \Phi_1}$, if there exist $a, b, x_0 >0$ such that 
		$$\Phi_2(x) \leq b \Phi_1(ax) \text{ for all } 0\leq x \leq x_0.$$
		\item We say that $\Phi_1$ \textbf{ majorizes} $\Phi_2$ at $\infty$, denoted by ${\Phi_2  \prec_{\infty} \Phi_1}$, if there exist $a, b, x_0 >0$ such that 
		$$\Phi_2(x) \leq b \Phi_1(ax) \text{ for all } x \geq x_0.$$
		\item  We say that $\Phi_1$ \textbf{ majorizes} $\Phi_2$, denoted by ${\Phi_2 \prec \Phi_1}$, if there exist $a, b >0$ such that 
		$$\Phi_2(x) \leq b \Phi_1(ax) \text{ for all } x \geq 0.$$
	\end{enumerate}

	We have the following embedding of Orlicz spaces.
	\begin{theorem}\label{embedding_result}
		Let $(\Omega, \mu)$ be a $\sigma$-finite measure space and $\Phi_1$ and $\Phi_2$ be two Orlicz functions. 
		\begin{enumerate}
			\item If $\mu$ is non-atomic and $\mu(\Omega)=\infty$, then 
			$$L^{\Phi_1}(\Omega,\mu) \subseteq L^{\Phi_2}(\Omega, \mu) \Leftrightarrow \Phi_2 \prec \Phi_1.$$
			\item If $\mu$ is atomic and $\mu(\omega)>0$ for all $\omega \in \Omega$, then 
			$$L^{\Phi_1}(\Omega,\mu) \subseteq L^{\Phi_2}(\Omega, \mu) \Leftrightarrow \Phi_2 \prec_0 \Phi_1.$$
			\item If $\mu$ is non-atomic and $\mu(\Omega)<\infty$, then 
			$$L^{\Phi_1}(\Omega,\mu) \subseteq L^{\Phi_2}(\Omega, \mu) \Leftrightarrow \Phi_2 \prec_{\infty} \Phi_1.$$
		\end{enumerate}
	\end{theorem}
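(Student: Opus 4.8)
The plan is to prove each of the three equivalences by establishing its two implications separately: sufficiency ($\Leftarrow$) by a direct modular estimate, and necessity ($\Rightarrow$) by the closed graph theorem combined with testing on characteristic functions. The three cases should differ only in the range of arguments over which the pointwise comparison of $\Phi_1$ and $\Phi_2$ is available or needed — all arguments in case (1), only large arguments in case (3), only small arguments in case (2).

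For sufficiency I would argue directly at the level of modulars. When $\Phi_2(x) \le b\,\Phi_1(ax)$ for all $x \ge 0$ (case 1), given $f \in L^{\Phi_1}(\Omega,\mu)$ choose $c>0$ with $\int_\Omega \Phi_1(|f|/c)\,d\mu \le 1$; then $\int_\Omega \Phi_2\big(|f|/(ac)\big)\,d\mu \le b\int_\Omega \Phi_1(|f|/c)\,d\mu \le b$, and rescaling $ac$ by $\max(b,1)$ and using convexity of $\Phi_2$ pushes the modular below $1$, so $f \in L^{\Phi_2}(\Omega,\mu)$ with a bound on the norm. For case 3 the inequality holds only for $x \ge x_0$; since $\mu(\Omega)<\infty$ I would split $\Omega$ into $\{|f| \ge acx_0\}$, on which the same estimate $\Phi_2(|f|/(ac)) \le b\,\Phi_1(|f|/c)$ applies, and its complement, on which $\Phi_2(|f|/(ac)) \le \Phi_2(x_0)$ and the integral is at most $\mu(\Omega)\Phi_2(x_0)<\infty$ (shrinking $x_0$ so that $\Phi_2(x_0)<\infty$ if necessary). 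For case 2, finiteness of $\int_\Omega \Phi_1(|f|/c)\,d\mu$ with the atoms of mass bounded below forces $|f|/(ac) \le x_0$ on all but finitely many atoms, so the near-zero inequality governs the tail while the finite exceptional set contributes only finitely.

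For necessity I would first note that if $L^{\Phi_1}(\Omega,\mu) \subseteq L^{\Phi_2}(\Omega,\mu)$ then the inclusion is an everywhere-defined linear map between Banach spaces whose graph is closed (norm convergence in either space forces an a.e.-convergent subsequence, so the limits coincide), hence bounded by the closed graph theorem: $\|g\|_{L^{\Phi_2}(\Omega)} \le C\|g\|_{L^{\Phi_1}(\Omega)}$ for all $g$. Testing on $g=\chi_E$ and using $\|\chi_E\|_{L^{\Phi}(\Omega)} = 1/\Phi^{-1}\big(1/\mu(E)\big)$ with $\Phi^{-1}(u):=\sup\{s \ge 0 : \Phi(s) \le u\}$, this becomes $\Phi_1^{-1}\big(1/\mu(E)\big) \le C\,\Phi_2^{-1}\big(1/\mu(E)\big)$. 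The structural hypotheses then control which $t:=1/\mu(E)$ occur: all $t \in (0,\infty)$ when $\mu$ is non-atomic with infinite total mass; all $t \ge 1/\mu(\Omega)$ when $\mu$ is non-atomic with finite total mass; and a sequence $t_n \downarrow 0$ when $\mu$ is atomic (taking unions of atoms). Passing $\Phi_1^{-1}(t)\le C\,\Phi_2^{-1}(t)$ back through the generalized inverses, and bridging the gaps between consecutive sample points by monotonicity of $\Phi_1,\Phi_2$ in the atomic case, would then yield $\Phi_2(x)\le\Phi_1(Cx)$ for all $x\ge0$, for large $x$, and for $x$ near $0$ respectively — that is, $\Phi_2 \prec \Phi_1$, $\Phi_2 \prec_\infty \Phi_1$, and $\Phi_2 \prec_0 \Phi_1$.

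The main obstacle I expect is bookkeeping rather than anything conceptual: an Orlicz function may be flat (even zero) near $0$ and may jump to $+\infty$, so $\Phi^{-1}$ is only a one-sided inverse and both the characteristic-function norm formula and the ``passing back through inverses'' step need short monotonicity arguments; and in the atomic case one must verify that the realizable values $1/\mu(E)$ cluster near $0$ finely enough — in a multiplicative sense — to recover the full relation $\prec_0$ rather than a comparison only along a sparse sequence, which is exactly where boundedness of the atom masses away from $0$ enters.
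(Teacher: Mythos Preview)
The paper does not prove this theorem: it is stated in the Preliminaries section as a known background result, with the surrounding material referring to the textbook of Rubshtein, Grabarnik, Muratov and Pashkova, and no proof or sketch is given. There is therefore no ``paper's own proof'' to compare against.

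That said, your outline is the standard textbook argument for such embedding theorems and is essentially what one finds in the cited reference: sufficiency by a direct modular estimate (splitting into large and small values of $|f|$ when only $\prec_\infty$ or $\prec_0$ is available), and necessity by invoking the closed graph theorem to obtain a norm inequality and then reading it off on characteristic functions via $\|\chi_E\|_{L^\Phi}=1/\Phi^{-1}(1/\mu(E))$. Your identification of the remaining technical points --- handling the generalized inverse when $\Phi$ is degenerate near $0$ or jumps to $+\infty$, and in the atomic case controlling the gaps between the realizable values of $1/\mu(E)$ --- is accurate; note in particular that the hypothesis in part~(2) as stated only requires $\mu(\omega)>0$, not that the atom masses be bounded away from~$0$, so your sufficiency argument (``$|f|/(ac)\le x_0$ off finitely many atoms'') and your necessity argument (``realizable $t$'s cluster near $0$ in a multiplicatively controlled way'') both tacitly use a stronger uniformity than is written, and you would need either to supply that extra argument or to check the precise hypotheses in the cited source.
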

	We say that two Orlicz functions $\Phi_1$ and $\Phi_2$ are equivalent, denoted by $\Phi_1 \sim \Phi_2$, if $\Phi_1 \prec \Phi_2$ and $\Phi_2 \prec \Phi_1$. This implies that $L^{\Phi_1}(\mathbb{R})=L^{\Phi_2}(\mathbb{R})$ and the norms $\| \cdot \|_{L^{\Phi_1}}$ and $\| \cdot \|_{L^{\Phi_2}}$ are equivalent.

	For more details about the Orlicz spaces, we refer the interested reader to \cite{raorenapplicationbook, book-foundations-of-symmetric-spaces}.\\
	
	We will need the following inclusion result proved in \cite{bhawna_orlicz}, according to which the existence of a function whose $\Lambda$-translates are complete will ensure the same in another Orlicz space based on a comparison around $0$.
	\begin{theorem}\label{inclusion_result}\cite[Theorem 1.3]{bhawna_orlicz}
		Let $\Lambda \subseteq \mathbb{R}$ and $\Phi_0 \in \Delta_2$. Suppose there exists an $f \in L^{\Phi_0}(\mathbb{R})$ whose $\Lambda$-translates are complete in $L^{\Phi_0}(\mathbb{R})$. Then for every Orlicz function $\Phi \in \Delta_2$ such that $\Phi \prec_0 \Phi_0$, there exists $g \in L^{\Phi}(\mathbb{R})$ whose $\Lambda$-translates are complete in $L^{\Phi}(\mathbb{R})$.
	\end{theorem}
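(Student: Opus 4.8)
The plan is to transport completeness from $L^{\Phi_0}$ to $L^{\Phi}$ by convolving $f$ with a fixed smoothing kernel having nowhere-vanishing Fourier transform, and then to transfer the annihilation condition through the \emph{adjoint} of the convolution operator. Concretely, I would fix the Gaussian $\varphi(x)=e^{-x^2}$ and set $g:=f*\varphi$. Since $\varphi\in L^1(\mathbb{R})$, the inclusion $L^1*L^{\Phi_0}\subseteq L^{\Phi_0}$ recorded in the preliminaries gives $g\in L^{\Phi_0}(\mathbb{R})$; since $\varphi\in L^{\Psi_0}(\mathbb{R})$ (a bounded, rapidly decaying function lies in every Orlicz space here), H\"older's inequality gives the pointwise bound $\|g\|_{\infty}\le 2\|f\|_{L^{\Phi_0}}\|\varphi\|_{L^{\Psi_0}}<\infty$. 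It then remains to upgrade ``$g$ bounded and in $L^{\Phi_0}$'' to ``$g\in L^{\Phi}$'', and this is exactly where $\Phi\prec_0\Phi_0$ enters. I would prove the auxiliary inclusion $L^{\Phi_0}(\mathbb{R})\cap L^{\infty}(\mathbb{R})\subseteq L^{\Phi}(\mathbb{R})$: writing $\Phi(x)\le b\,\Phi_0(ax)$ for $0\le x\le x_0$, one scales a bounded $w$ by a large $s$ so that $|w|/s\le x_0$ pointwise while $s/a$ simultaneously exceeds a modular threshold for $w$, yielding $\int\Phi(|w|/s)\le b\int\Phi_0(|w|/(s/a))<\infty$.

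Next I would set up the convolution operator $T_\varphi\colon L^{\Phi_0}(\mathbb{R})\to L^{\Phi}(\mathbb{R})$, $T_\varphi u=u*\varphi$. The computation above (applied to each $u$) shows $T_\varphi$ is well defined, and a closed-graph argument gives boundedness: if $u_n\to u$ in $L^{\Phi_0}$ then $u_n*\varphi\to u*\varphi$ uniformly by H\"older, which pins down any $L^{\Phi}$-limit. The structural facts I need are $T_\varphi\tau_\lambda=\tau_\lambda T_\varphi$ and $T_\varphi(\tau_\lambda f)=\tau_\lambda g$. Because $\Phi,\Phi_0\in\Delta_2$, Theorem \ref{duality_theorem} identifies $(L^{\Phi})^{*}=L^{\Psi}$ and $(L^{\Phi_0})^{*}=L^{\Psi_0}$, so the adjoint is a bounded map $T_\varphi^{*}\colon L^{\Psi}\to L^{\Psi_0}$.

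The heart of the argument is the annihilator transfer. By Hahn--Banach and the duality just recalled, it suffices to show that the only $k\in L^{\Psi}$ with $\langle\tau_\lambda g,k\rangle=0$ for all $\lambda\in\Lambda$ is $k=0$. Set $h:=T_\varphi^{*}k$. Then for every $\lambda\in\Lambda$ we have $\langle\tau_\lambda f,h\rangle=\langle T_\varphi\tau_\lambda f,k\rangle=\langle\tau_\lambda g,k\rangle=0$. The crucial point is that, although $k$ lives in $L^{\Psi}$ rather than in $L^{\Psi_0}$, the adjoint automatically places $h$ in $L^{\Psi_0}=(L^{\Phi_0})^{*}$; hence $h$ is a legitimate annihilator of $\{\tau_\lambda f\}_{\lambda\in\Lambda}$, and completeness of these translates in $L^{\Phi_0}$ forces $h=0$. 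A Fubini computation identifies the adjoint as convolution, $h=\check\varphi*k=\varphi*k$ (the Gaussian is even), so $\varphi*k=0$.

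Finally I would invoke injectivity of Gaussian convolution: viewing $k\in L^{\Psi}\subseteq\mathcal{S}'$ and writing $\varphi*k=e^{t\Delta}k$ for a suitable $t$, the relation $\widehat{\varphi}\,\widehat{k}=0$ with $\widehat{\varphi}$ nowhere vanishing yields $\widehat{k}=0$ (test against $C_c^{\infty}$ on the Fourier side, using that $\zeta e^{t\xi^2}\in C_c^{\infty}$ for $\zeta\in C_c^{\infty}$), so $k=0$. This establishes completeness of $\{\tau_\lambda g\}_{\lambda\in\Lambda}$ in $L^{\Phi}(\mathbb{R})$. I expect the main obstacle to lie in the two places where the hypothesis cannot be replaced by a naive inclusion of spaces: first, on $\mathbb{R}$ the condition $\Phi\prec_0\Phi_0$ gives \emph{no} global embedding $L^{\Phi_0}\subseteq L^{\Phi}$, so one must route through the boundedness of $g$ and the inclusion $L^{\Phi_0}\cap L^{\infty}\subseteq L^{\Phi}$; and second, the annihilator $k\in L^{\Psi}$ need not lie in $L^{\Psi_0}$, an obstruction that is bypassed precisely because the adjoint $T_\varphi^{*}$ lands in $L^{\Psi_0}$ by general duality. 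The remaining verifications (temperedness of $L^{\Psi}$ and the Fubini adjoint identity) are routine.
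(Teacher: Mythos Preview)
The paper does not prove this statement; it is quoted from \cite{bhawna_orlicz} as a preliminary result, so there is no in-paper argument to compare against. Your proof is correct, and the two structural points you single out are exactly the ones that matter: the hypothesis $\Phi\prec_0\Phi_0$ gives no global embedding $L^{\Phi_0}\subseteq L^{\Phi}$ on $\mathbb{R}$, but it \emph{does} give $L^{\Phi_0}(\mathbb{R})\cap L^{\infty}(\mathbb{R})\subseteq L^{\Phi}(\mathbb{R})$, which is all that is needed once the Gaussian smoothing makes $g=f*\varphi$ bounded; and the annihilator $k\in L^{\Psi}$ is carried into $L^{\Psi_0}$ automatically by the adjoint of the bounded operator $T_{\varphi}$, so the completeness hypothesis in $L^{\Phi_0}$ can be invoked. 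The verifications you flag as routine really are: Fubini in the adjoint identity $T_{\varphi}^{*}k=\varphi*k$ is justified because $|u|*|\varphi|\in L^{\Phi_0}\cap L^{\infty}\subseteq L^{\Phi}$ by the same inclusion, making the double integral absolutely convergent by H\"older; and injectivity of $k\mapsto\varphi*k$ on $L^{\Psi}$ can be obtained without any temperedness discussion, using Wiener's theorem (the closed ideal in $L^{1}$ generated by $\varphi$ is all of $L^{1}$), the Young-type bound $\|\zeta*k\|_{L^{\Psi}}\le\|\zeta\|_{1}\|k\|_{L^{\Psi}}$, and Lebesgue differentiation for an approximate identity.
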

	
	We also need a notion of a certain density of a set. For any discrete set $\Lambda \subset \mathbb{R}$, its Beurling-Malliavin Density $D_{BM}(\Lambda)$ is defined as 
	\begin{equation*}
		D_{BM}(\Lambda):=\sup \left\{ D>0 ~:~ \substack{\mbox{ there is a substantial sequence of intervals }\{I_k\}_{k \in \mathbb{N}} \\ \mbox{ such that } \#(\Lambda \cap I_k) \geq D|I_k|, \mbox{ for all } k\in \mathbb{N}} \right\}.
	\end{equation*}
	We say that a sequence of intervals $I_k, k \in \mathbb{N}$ is substantial if it belongs to the positive or negative half-axis, $|I_k|>1$ for all $ k \in \mathbb{N}$, the intervals are disjoint, and
	$$\sum\limits_{k=1}^{\infty}\left( \frac{|I_k|}{\text{dist}(I_k,0)}  \right)^2 = \infty.$$
	

	\section{Main Results}
	
	First, we characterize those functions in Orlicz spaces, which are continuously embedded in $L^1(\mathbb{R})$, whose all translates are complete in terms of the zero set of their Fourier transform. The main tool used in the proof is from the Gelfand theory of the commutative Banach algebras.\\

	Observe that $L^{\Phi}(\mathbb{R})$ is a Banach algebra under convolution and the subspace generated by all translates of any function is a closed ideal of $L^{\Phi}(\mathbb{R})$. In fact, more is true.
	\begin{lemma}\label{translation_invariant_ideal}
		Every closed translation-invariant subspace of $L^{\Phi}(\mathbb{R})$ is an ideal. 
	\end{lemma}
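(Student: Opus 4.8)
The plan is to prove the (formally stronger) statement that for every closed translation-invariant subspace $V\subseteq L^{\Phi}(\mathbb{R})$, every $f\in V$ and every $g\in L^{\Phi}(\mathbb{R})$ one has $f*g\in V$; since $L^{\Phi}(\mathbb{R})$ is a convolution algebra, this says precisely that $V$ is an ideal. The engine is the principle that a convolution is a \emph{continuous superposition of translates}. Concretely, for $g\in C_c(\mathbb{R})$ with $\operatorname{supp}g\subseteq[-R,R]$ I would write
\begin{equation*}
	(f*g)(x)=\int_{-R}^{R} g(t)\,f(x-t)\,dt=\int_{-R}^{R} g(t)\,(\tau_t f)(x)\,dt ,
\end{equation*}
and read the right-hand side as a Bochner integral in $L^{\Phi}(\mathbb{R})$ of the map $t\mapsto g(t)\,\tau_t f$; equivalently, $f*g$ is the $L^{\Phi}$-limit of Riemann sums $\sum_j g(t_j)\,\tau_{t_j}f\,\Delta t_j$, each of which is a finite linear combination of translates of $f$ and hence lies in $V$ because $V$ is a translation-invariant subspace. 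As $V$ is closed, the limit $f*g$ then belongs to $V$.

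To carry this out I would first record two soft ingredients. Since $L^{\Phi}(\mathbb{R})$ is continuously embedded in $L^1(\mathbb{R})$, say $\|h\|_{L^1}\le C\|h\|_{L^{\Phi}}$, combining this with $\|h_1*h_2\|_{L^{\Phi}}\le\|h_1\|_{L^1}\|h_2\|_{L^{\Phi}}$ (the inclusion $L^1*L^{\Phi}\subseteq L^{\Phi}$ recalled in Section~\ref{Preliminaries}) shows that convolution is a bounded bilinear map on $L^{\Phi}(\mathbb{R})$; together with the density of $C_c(\mathbb{R})$ in $L^{\Phi}(\mathbb{R})$ (valid since $\Phi\in\Delta_2$) this reduces $f*g\in V$ to the case $g\in C_c(\mathbb{R})$. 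Second, I would verify that $t\mapsto\tau_t f$ is continuous from $\mathbb{R}$ into $L^{\Phi}(\mathbb{R})$: approximate $f$ in norm by $\varphi\in C_c(\mathbb{R})$, for which $t\mapsto\tau_t\varphi$ is uniformly continuous, and transport this to $f$ using the translation invariance of $\|\cdot\|_{L^{\Phi}}$. This strong continuity is exactly what makes the Riemann sums converge: on $[-R,R]$ the uniform continuity of $g$ controls $|g(t)-g(t_j)|$ while the continuity of translation controls $\|\tau_t f-\tau_{t_j}f\|_{L^{\Phi}}$, so a routine splitting bounds $\bigl\|f*g-\sum_j g(t_j)\tau_{t_j}f\,\Delta t_j\bigr\|_{L^{\Phi}}$ by a quantity that tends to $0$ as the mesh shrinks. (In the Bochner-integral formulation the relevant facts are instead that a Bochner-integrable function with values in the closed subspace $V$ has its integral in $V$, and that testing the integral against any $\phi\in L^{\Psi}(\mathbb{R})$ recovers $f*g$ by Fubini, so the integral genuinely equals $f*g$; here one may invoke Theorem~\ref{duality_theorem} to know that $L^{\Psi}(\mathbb{R})$ separates points of $L^{\Phi}(\mathbb{R})$.)

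Combining these gives $f*g\in V$ for $g\in C_c(\mathbb{R})$, hence for all $g\in L^{\Phi}(\mathbb{R})$ by density, so $V$ is an ideal of the convolution algebra $L^{\Phi}(\mathbb{R})$. I expect the only genuinely delicate point to be the representation/approximation step—that $f*g$ is literally an $L^{\Phi}$-limit of linear combinations of translates of $f$—since it is here that the $\Delta_2$ hypothesis is indispensable: without it $C_c(\mathbb{R})$ need not be dense and translation need not be strongly continuous, so the statement itself would require reinterpretation. The remaining pieces (bilinear continuity of convolution, the fact that a closed subspace absorbs its Bochner integrals, and the Fubini identification) are routine.
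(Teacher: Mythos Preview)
Your proof is correct but takes a genuinely different route from the paper's. The paper gives a two-line duality argument: for $f\in M$, $g\in L^{\Phi}(\mathbb{R})$ and $h\in M^{\perp}\subseteq L^{\Psi}(\mathbb{R})$, Fubini gives
\[
\int_{\mathbb{R}}(f*g)(x)h(x)\,dx=\int_{\mathbb{R}}g(y)\Bigl(\int_{\mathbb{R}}f(x-y)h(x)\,dx\Bigr)dy=0,
\]
since $\tau_y f\in M$ for every $y$; then Hahn--Banach (with $(L^{\Phi})^{*}=L^{\Psi}$ from $\Phi\in\Delta_2$) forces $f*g\in M$. You instead realise $f*g$ concretely as an $L^{\Phi}$-limit of Riemann sums $\sum_j g(t_j)\tau_{t_j}f\,\Delta t_j$, each a finite linear combination of translates of $f$, after reducing to $g\in C_c(\mathbb{R})$ and establishing strong continuity of $t\mapsto\tau_t f$.

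The paper's approach is shorter and needs neither strong continuity of translation nor any Bochner machinery; once duality and Fubini are on the table it is immediate. Your approach is more constructive and makes the slogan ``convolution is a continuous superposition of translates'' literal, which is conceptually pleasant; it trades Hahn--Banach for density of $C_c$ (both consequences of $\Delta_2$). It is worth noting that your parenthetical Bochner identification---test against $\phi\in L^{\Psi}$ and apply Fubini---is essentially the paper's entire argument, so the Riemann-sum layer, while correct, could be bypassed once you have that observation in hand.
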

	\begin{proof}
		Let $M$ be a closed translation-invariant subspace of $L^{\Phi}(\mathbb{R})$. Define $M^{\perp}:=\{ h \in L^{\Psi}(\mathbb{R}):\langle f,h \rangle =0 \text{ for all }f \in M  \}$. Now take $f \in M$, $g \in L^{\Phi}(\mathbb{R})$. Then an application of the Hahn-Banach theorem shows that $f*g \in M$ if and only if $\int\limits_{\mathbb{R}} (f*g)(x)h(x)dx=0$ for all $h \in M^{\perp}$. It happens since by Fubini's theorem we have
		\begin{align*}
			\int\limits_{\mathbb{R}} (f*g)(x)h(x)dx & =  \int\limits_{\mathbb{R}} \int\limits_{\mathbb{R}} g(y)f(x-y)h(x)dydx\\
			& = \int\limits_{\mathbb{R}}\left(\int\limits_{\mathbb{R}} f(x-y)h(x)dx\right)g(y)dy \\
			& = 0 ~~[\text{since } f \in M \text{ and } h \in M^{\perp}]
		\end{align*}
		thereby finishing the claim.
	\end{proof}
	
	Using the following result from the theory of commutative Banach algebra, we see that every proper closed ideal in $L^{\Phi}(\mathbb{R})$ is contained in a regular maximal ideal.
	\begin{lemma}\cite[p.~85]{loomis1953}\label{loomis_result}
		Let $B$ be a regular semi-simple Banach algebra with the property that the set of elements $f$ such that its Gelfand transform $\hat{f}$ has compact support is dense in $B$. Then every proper closed ideal is included in a regular maximal ideal. 
	\end{lemma}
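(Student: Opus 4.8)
The plan is to recognise this as the abstract Wiener--Tauberian theorem and to prove it entirely inside the Gelfand theory of the (commutative) Banach algebra $B$.

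First I would set up the dictionary. Let $\Delta$ be the maximal ideal space of $B$, that is, the locally compact Hausdorff space of nonzero multiplicative linear functionals on $B$, and let $a \mapsto \hat a$ denote the Gelfand transform, which maps $B$ into $C_0(\Delta)$ and is injective precisely because $B$ is semi-simple. The regular (modular) maximal ideals of $B$ are exactly the kernels $\ker \varphi$, $\varphi \in \Delta$, so for a closed ideal $I \subseteq B$ the statement ``$I$ is contained in a regular maximal ideal'' is equivalent to the non-emptiness of the hull $h(I) := \{\varphi \in \Delta : \hat a(\varphi) = 0 \text{ for all } a \in I\}$. Hence it suffices to prove the contrapositive: if $I$ is a \emph{closed} ideal with $h(I) = \emptyset$, then $I = B$. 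Put $B_c := \{a \in B : \hat a \text{ has compact support}\}$; this is dense in $B$ by hypothesis, and since $I$ is closed it is enough to prove $B_c \subseteq I$.

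The heart of the argument is a localisation principle: for $a \in B_c$, if for every $\varphi \in \Delta$ there is some $g \in I$ with $\hat g = \hat a$ on a neighbourhood of $\varphi$, then $a \in I$. To prove this, let $K = \operatorname{supp}\hat a$, which is compact, cover $K$ by finitely many such neighbourhoods $U_1, \dots, U_n$ with associated $g_1, \dots, g_n \in I$ satisfying $\hat g_j = \hat a$ on $U_j$, and invoke regularity --- this is where regularity is genuinely needed, together with compactness of $K$ --- to produce $u_1, \dots, u_n \in B$ with $\operatorname{supp}\hat u_j \subseteq U_j$ compact and $\sum_{j} \hat u_j \equiv 1$ on a neighbourhood of $K$, i.e.\ a finite partition of unity in the regular commutative Banach algebra $B$. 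Then $\widehat{\sum_j a u_j} = \hat a \sum_j \hat u_j = \hat a$ on all of $\Delta$, and for each $j$ the functions $\widehat{a u_j} = \hat a\,\hat u_j$ and $\widehat{g_j u_j} = \hat g_j\,\hat u_j$ agree on $U_j$ and both vanish off $\operatorname{supp}\hat u_j$, hence agree on all of $\Delta$; semi-simplicity now gives $a = \sum_j a u_j$ and $a u_j = g_j u_j \in I$, so $a \in I$. It remains to check the hypothesis of this principle when $h(I) = \emptyset$: given $a \in B_c$ and $\varphi_0 \in \Delta$, if $\hat a$ vanishes near $\varphi_0$ take $g = 0$; otherwise choose $b \in I$ with $\hat b(\varphi_0) = 1$ (after normalising), use regularity to pick $u \in B$ with $\hat u \equiv 1$ on a neighbourhood of $\varphi_0$ and $\operatorname{supp}\hat u$ a compact subset of $\{|1 - \hat b| < \varepsilon\}$, and set $w := u - ub \in B$, so $\hat w = \hat u(1 - \hat b)$ is small on $\operatorname{supp}\hat u$ and vanishes off it. For a sufficiently tight cutoff its spectral radius $\|\hat w\|_\infty$ is $<1$, so the Neumann series $s := \sum_{k \geq 1} w^k$ converges in $B$ and, in the unitisation $B^{+}$, $(e + s)(e - w) = e$; hence $(1 + \hat s)\,\widehat{(e - w)} \equiv 1$ on $\Delta$ while $\widehat{(e - w)} = \hat b$ on the neighbourhood of $\varphi_0$ where $\hat u \equiv 1$. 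Therefore $g := ab + asb = a(b + sb)$ belongs to $I$ (as $b \in I$ and $I$ is an ideal) and satisfies $\hat g = \hat a(1 + \hat s)\hat b = \hat a$ on that neighbourhood, as required.

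The routine parts are the translation into Gelfand transforms and the two appeals to semi-simplicity. The hard part --- and the reason the lemma is quoted from \cite{loomis1953} rather than reproved here --- is the quantitative exploitation of regularity: one needs the local unit $u$ chosen finely enough that $w = u - ub$ has spectral radius strictly below $1$ (equivalently, that $b$ admits a local inverse as an element of $B^{+}$), and one needs the finite partitions of unity with prescribed compact supports used in the localisation step. These are precisely the structural features of regular commutative Banach algebras developed in Loomis's book.
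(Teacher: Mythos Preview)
The paper does not prove this lemma at all; it is quoted verbatim from Loomis \cite[p.~85]{loomis1953} as a black box and then \emph{applied} in Lemma~\ref{proper_maximal}. So there is no ``paper's own proof'' to compare against, and your outline is supplying what the authors deliberately omitted.

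That said, your outline is correct and is precisely the classical argument one finds in Loomis: reformulate the conclusion as ``$h(I)=\emptyset \Rightarrow I=B$'', reduce by density to showing $B_c\subseteq I$, prove a local--to--global principle for $a\in B_c$ via a finite partition of unity in the regular algebra (this uses compactness of $\operatorname{supp}\hat a$ and semi-simplicity), and manufacture the required local data $g\in I$ with $\hat g=\hat a$ near $\varphi_0$ by a Neumann-series local inversion of $b\in I$ with $\hat b(\varphi_0)\neq 0$. The one place to be slightly more careful in a full write-up is the claim $\rho(w)=\|\hat w\|_\infty$: equality of spectral radius and sup norm of the Gelfand transform is standard for commutative Banach algebras, but you should state it, since the convergence of $\sum_{k\ge 1} w^k$ hinges on $\rho(w)<1$ rather than on $\|w\|<1$. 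Otherwise nothing is missing.
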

	
	For this, we first identify the maximal ideal space of $L^{\Phi}(\mathbb{R})$.
	\begin{lemma}
		The maximal ideal space of $L^{\Phi}(\mathbb{R})$ is equivalent to $\widehat{\mathbb{R}}$ and the Gelfand transform can be identified with the Fourier transform.
	\end{lemma}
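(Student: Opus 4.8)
The plan is to transfer the classical description of the maximal ideal space of $L^1(\mathbb{R})$ to $L^{\Phi}(\mathbb{R})$, exploiting that, under the standing hypothesis $L^{\Phi}(\mathbb{R})\subseteq L^1(\mathbb{R})$, the algebra $L^{\Phi}(\mathbb{R})$ sits inside $L^1(\mathbb{R})$ as a dense convolution ideal. Here I would use that $L^1(\mathbb{R})*L^{\Phi}(\mathbb{R})\subseteq L^{\Phi}(\mathbb{R})$ (recalled in Section~\ref{Preliminaries}), that the compactly supported simple functions lie in $L^{\Phi}(\mathbb{R})$ and are $\|\cdot\|_1$-dense in $L^1(\mathbb{R})$, and that convolution is a bounded bilinear map $L^1(\mathbb{R})\times L^{\Phi}(\mathbb{R})\to L^{\Phi}(\mathbb{R})$, say with $\|h*f\|_{L^{\Phi}}\le\|h\|_1\|f\|_{L^{\Phi}}$ (which also follows from the closed graph theorem applied to the inclusion $L^1*L^{\Phi}\subseteq L^{\Phi}$). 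Since in a commutative Banach algebra the regular maximal ideals are precisely the kernels of the nonzero multiplicative linear functionals, and the Gelfand topology on the maximal ideal space corresponds to the weak-$*$ topology on the character space $\Delta(L^{\Phi})$, it suffices to exhibit a homeomorphism $\Delta(L^{\Phi})\cong\Delta(L^1)\cong\widehat{\mathbb{R}}$ intertwining the two Gelfand transforms.

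The core step is to show that every $\varphi\in\Delta(L^{\Phi})$ extends to a character of $L^1(\mathbb{R})$. I would fix $g\in L^{\Phi}(\mathbb{R})$ with $\varphi(g)=1$ (possible after rescaling, as $\varphi\neq0$) and define $\widetilde{\varphi}(h):=\varphi(h*g)$ for $h\in L^1(\mathbb{R})$; this makes sense because $h*g\in L^{\Phi}(\mathbb{R})$, and it is bounded by the convolution inequality. Using commutativity of convolution and multiplicativity of $\varphi$ on $L^{\Phi}(\mathbb{R})$, one checks that $\widetilde{\varphi}$ is multiplicative and independent of the choice of $g$, that $\widetilde{\varphi}(g)=\varphi(g)^2=1$ so $\widetilde{\varphi}\neq0$, and that $\widetilde{\varphi}(f)=\varphi(f*g)=\varphi(f)\varphi(g)=\varphi(f)$ for $f\in L^{\Phi}(\mathbb{R})$. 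Then $\widetilde{\varphi}$ is a nonzero character of $L^1(\mathbb{R})$, so by the Gelfand theory of the group algebra $L^1(\mathbb{R})$ there is a unique $\xi\in\widehat{\mathbb{R}}$ with $\widetilde{\varphi}(h)=\widehat{h}(\xi)$ for all $h$, whence $\varphi(f)=\widehat{f}(\xi)$ for every $f\in L^{\Phi}(\mathbb{R})$.

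For the converse and the topology, I would note that each $\xi\in\widehat{\mathbb{R}}$ gives a functional $\varphi_{\xi}(f):=\widehat{f}(\xi)$ on $L^{\Phi}(\mathbb{R})$ which is linear, bounded ($|\widehat{f}(\xi)|\le\|f\|_1\le C\|f\|_{L^{\Phi}}$), multiplicative (since $\widehat{f*g}=\widehat{f}\,\widehat{g}$), and nonzero because the bounded integrable function $g_0(x)=e^{-x}\chi_{[0,\infty)}(x)\in L^{\Phi}(\mathbb{R})$ has $\varphi_{\xi}(g_0)=\widehat{g_0}(\xi)\neq0$ for every $\xi$. By the previous paragraph the map $\xi\mapsto\varphi_{\xi}$ is onto $\Delta(L^{\Phi})$, and it is injective because $L^{\Phi}(\mathbb{R})$ is $\|\cdot\|_1$-dense in $L^1(\mathbb{R})$ and $L^1$-Fourier transforms separate the points of $\widehat{\mathbb{R}}$. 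Finally this bijection is a homeomorphism: it is continuous since $\xi\mapsto\varphi_{\xi}(f)=\widehat{f}(\xi)$ is continuous for each $f$ (Riemann--Lebesgue), and its inverse is continuous since $\widehat{g_0}$ is a topological embedding of $\widehat{\mathbb{R}}$ into $\mathbb{C}$, so $\xi$ is recovered continuously from $\varphi\mapsto\varphi(g_0)$. This will identify $\Delta(L^{\Phi})$ with $\widehat{\mathbb{R}}$ and the Gelfand transform of $f\in L^{\Phi}(\mathbb{R})$ with $\widehat{f}$.

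The main obstacle I anticipate is the character-extension argument of the second paragraph: one needs to be sure that convolution maps $L^1(\mathbb{R})\times L^{\Phi}(\mathbb{R})$ boundedly \emph{into} $L^{\Phi}(\mathbb{R})$ (not merely into $L^1$), so that $\widetilde{\varphi}$ is a genuine bounded functional, and to verify carefully its multiplicativity and its independence of the auxiliary element $g$. Once the bijection $\xi\mapsto\varphi_{\xi}$ is established, the topological part is routine.
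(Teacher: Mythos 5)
Your argument is correct, but it proceeds quite differently from the paper's. The paper goes through the dual space: since $\Phi\in\Delta_2$, Theorem~\ref{duality_theorem} represents a character $\nu$ as integration against some $g\in L^{\Psi}(\mathbb{R})$; multiplicativity plus Fubini then forces the a.e.\ functional equation $g(x)g(y)=g(x+y)$, whose only solutions in $L^{\Psi}$ are the bounded exponentials $e^{ibx}$, so $\nu=\nu_b$. You instead exploit the Banach $L^1$-module structure $L^1(\mathbb{R})*L^{\Phi}(\mathbb{R})\subseteq L^{\Phi}(\mathbb{R})$ to extend each character $\varphi$ of $L^{\Phi}(\mathbb{R})$ to a character $\widetilde{\varphi}(h)=\varphi(h*g)$ of $L^1(\mathbb{R})$ (with $\varphi(g)=1$) and then quote the classical description of $\Delta(L^1(\mathbb{R}))$. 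Both routes rest on the standing embedding $L^{\Phi}(\mathbb{R})\subseteq L^1(\mathbb{R})$ with $\|\cdot\|_1\le\|\cdot\|_{L^{\Phi}}$, and the easy inclusion $\widehat{\mathbb{R}}\hookrightarrow\Delta(L^{\Phi})$ is the same in both. Your version buys independence from the duality theorem (hence from reflexivity-type hypotheses on $\Psi$ and from solving the a.e.\ Cauchy equation), at the price of invoking the full Gelfand theory of the group algebra; it also makes explicit the homeomorphism of topologies, which the paper leaves implicit but actually uses later when proving regularity in Lemma~\ref{proper_maximal}. Two small points to tighten: justify that $g_0=e^{-x}\chi_{[0,\infty)}$ indeed lies in $L^{\Phi}(\mathbb{R})$ (it does: $\int_0^\infty\Phi(e^{-x}/a)\,dx\le\Phi(1/a)<\infty$ since $\Phi(u)/u$ is nondecreasing), and note that the closed graph theorem only yields $\|h*f\|_{L^{\Phi}}\le C\|h\|_1\|f\|_{L^{\Phi}}$ for some constant $C$ rather than $C=1$, which is all you need anyway since multiplicative functionals are automatically bounded.
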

	\begin{proof}
		Since $L^{\Phi}(\mathbb{R}) \subset L^1(\mathbb{R})$ with $ \|\cdot\|_{1} \leq \|\cdot\|_{L^{\Phi}}$, we have that $\widehat{\mathbb{R}}$ is contained in the maximal ideal space of $L^{\Phi}(\mathbb{R})$.\\
		Indeed, if $\zeta \in \widehat{\mathbb{R}}$, then the Fourier transform of $f$ at $\zeta$ defined by
		$$\hat{f}(\zeta):=\int\limits_{\mathbb{R}}f(x)e^{ix\zeta} dx$$
		defines a multiplicative linear functional on $L^1(\mathbb{R})$. Thus the map $\nu_{\zeta}:~ L^{\Phi}(\mathbb{R}) \rightarrow \mathbb{C}$ given by $\nu_{\zeta}(f) = \mathcal{F}{f}(\zeta)$	is multiplicative and $|\nu_{\zeta}(f)| \leq \|f\|_1 \leq \|f\|_{L^{\Phi}}$ for all $f \in L^{\Phi}(\mathbb{R})$. \\
		
		Now, for the reverse inclusion, let $\nu$ be a multiplicative linear functional on $L^{\Phi}(\mathbb{R})$. By duality, there exists $g \in L^{\Psi}(\mathbb{R})$ such that $\nu(f)=\langle f,g\rangle=\int\limits_{\mathbb{R}} f(x)g(x)dx$ for all $f \in L^{\Phi}(\mathbb{R})$.\\
		Since $\nu$ is multiplicative, for all $f_1,f_2 \in L^{\Phi}(\mathbb{R})$ we have
		$$\int\limits_{\mathbb{R}}\int\limits_{\mathbb{R}} f_1(x)f_2(y)g(x)g(y)dxdy= \nu(f_1)\phi(f_2)=\nu(f_1*f_2)=\int\limits_{\mathbb{R}}\int\limits_{\mathbb{R}} f_1(x)f_2(y)g(x+y)dxdy$$
		which implies that $g(x)g(y)\equiv  g(x+y)$ as elements in $L^{\Psi}(\mathbb{R}^2,\mathbb{C})$ proving that $g(x)g(y)=  g(x+y)$ a.e.
		But the only solution to this is $g(x)=e^{(a+ib)x}$ for $a,b \in \mathbb{R}$. 
		Since $g \in L^{\Psi}(\mathbb{R})$, we have that $a=0$ and thus $\nu=\nu_{b}$.
	\end{proof}
	
	From the above results, we have the following:
	\begin{lemma}\label{proper_maximal}
		Every proper closed ideal of $L^{\Phi}(\mathbb{R})$ is contained in a regular maximal ideal.
	\end{lemma}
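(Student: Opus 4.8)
The plan is to verify that the commutative convolution Banach algebra $L^{\Phi}(\mathbb{R})$ satisfies the three hypotheses of Lemma~\ref{loomis_result} --- that it is regular, semi-simple, and that the set of $f$ whose Gelfand transform has compact support is dense in it --- and then the conclusion is immediate. By the lemma identifying the maximal ideal space, the structure space of $L^{\Phi}(\mathbb{R})$ is $\widehat{\mathbb{R}}$ and the Gelfand transform is the Fourier transform. Semi-simplicity follows at once: since $L^{\Phi}(\mathbb{R}) \subseteq L^{1}(\mathbb{R})$ and the Fourier transform is injective on $L^{1}(\mathbb{R})$, it is injective on $L^{\Phi}(\mathbb{R})$, so the radical of $L^{\Phi}(\mathbb{R})$ is trivial.

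For regularity I would first record that every Schwartz function lies in $L^{\Phi}(\mathbb{R})$: if $h \in L^{1}(\mathbb{R}) \cap L^{\infty}(\mathbb{R})$ with $\|h\|_{\infty} \le M$, then by convexity of $\Phi$ together with $\Phi(0)=0$ one has, for $a$ large enough that $\Phi(M/a)<\infty$,
\[
\Phi\!\left(\frac{|h(x)|}{a}\right) \;\le\; \frac{|h(x)|}{M}\,\Phi\!\left(\frac{M}{a}\right),
\]
so that $\int_{\mathbb{R}}\Phi(|h|/a)\,dx<\infty$; hence $L^{1}(\mathbb{R})\cap L^{\infty}(\mathbb{R})\subseteq L^{\Phi}(\mathbb{R})$, and in particular $\mathcal{S}(\mathbb{R})\subseteq L^{\Phi}(\mathbb{R})$. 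Now, given a closed set $E\subseteq\widehat{\mathbb{R}}$ and a point $\xi_{0}\notin E$, choose $\psi\in C_{c}^{\infty}(\widehat{\mathbb{R}})$ with $\psi\equiv 1$ near $\xi_{0}$ and $\operatorname{supp}\psi\cap E=\emptyset$; its inverse Fourier transform $h$ is a Schwartz function, hence lies in $L^{\Phi}(\mathbb{R})$, and $\widehat{h}=\psi$ separates $\xi_{0}$ from $E$. This shows that the hull--kernel and Gelfand topologies on $\widehat{\mathbb{R}}$ agree, i.e.\ $L^{\Phi}(\mathbb{R})$ is regular.

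The step I expect to be the main obstacle is the density of the functions whose Fourier transform has compact support, and this is exactly where the $\Delta_{2}$-condition will be used. The plan is to mollify: let $K$ denote a suitably normalized Fej\'er kernel on $\mathbb{R}$, so that $K\ge 0$, $\int_{\mathbb{R}}K\,dx=1$, and $\widehat{K}$ is supported in $[-1,1]$, and for $R>0$ set $k_{R}(x)=R\,K(Rx)$, so that $\widehat{k_{R}}$ is supported in $[-R,R]$. For $f\in L^{\Phi}(\mathbb{R})$ the convolution $k_{R}*f$ again belongs to $L^{\Phi}(\mathbb{R})$ (using $L^{1}(\mathbb{R})*L^{\Phi}(\mathbb{R})\subseteq L^{\Phi}(\mathbb{R})$) and $\widehat{k_{R}*f}=\widehat{k_{R}}\,\widehat{f}$ is supported in $[-R,R]$, so it suffices to show $\|k_{R}*f-f\|_{L^{\Phi}(\mathbb{R})}\to 0$ as $R\to\infty$. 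Since $\Phi\in\Delta_{2}$, translations act continuously on $L^{\Phi}(\mathbb{R})$ (a standard consequence of the $\Delta_{2}$-condition) and $\|\tau_{y}f-f\|_{L^{\Phi}(\mathbb{R})}\le 2\|f\|_{L^{\Phi}(\mathbb{R})}$; writing $(k_{R}*f)(x)-f(x)=\int_{\mathbb{R}}k_{R}(y)\bigl(f(x-y)-f(x)\bigr)\,dy$ and invoking Minkowski's integral inequality for the Luxemburg norm,
\[
\|k_{R}*f-f\|_{L^{\Phi}(\mathbb{R})}\;\le\;\int_{\mathbb{R}}k_{R}(y)\,\|\tau_{y}f-f\|_{L^{\Phi}(\mathbb{R})}\,dy,
\]
and since $k_{R}$ concentrates its mass near the origin, the right-hand side tends to $0$. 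Thus the functions with compactly supported Fourier transform are dense in $L^{\Phi}(\mathbb{R})$. With all three hypotheses verified, Lemma~\ref{loomis_result} yields that every proper closed ideal of $L^{\Phi}(\mathbb{R})$ is contained in a regular maximal ideal.
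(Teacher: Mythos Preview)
Your proof is correct and follows essentially the same route as the paper: you verify the three hypotheses of Lemma~\ref{loomis_result} (semi-simplicity via injectivity of the Fourier transform on $L^{1}$, regularity via inverse Fourier transforms of $C_{c}^{\infty}$ bump functions, and density of functions with compactly supported Fourier transform). In fact you are more thorough than the paper on the third point---the paper's proof addresses only semi-simplicity and regularity explicitly, whereas you supply the Fej\'er-kernel mollification argument for density.
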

	\begin{proof}
		Using Lemma \ref{loomis_result}, it is enough to show that $L^{\Phi}(\mathbb{R})$ is a regular semi-simple Banach algebra such that the set of those functions whose Gelfand transform has compact support is dense in $L^{\Phi}(\mathbb{R})$. But note that the Gelfand transform on $L^{\Phi}(\mathbb{R})$ is equivalent to the Fourier transform, which is unique  on $L^1(\mathbb{R}) \supseteq L^{\Phi}(\mathbb{R})$. Hence $L^{\Phi}(\mathbb{R})$ is semi-simple. Also $L^{\Phi}(\mathbb{R})$ is regular, since the topology on $\widehat{\mathbb{R}}$ given by the Euclidean distance $d(\zeta,\eta)=|\zeta-\eta|$ is equivalent to the weak topology on $\widehat{\mathbb{R}}$ generated by functions in the Gelfand representation of $L^{\Phi}(\mathbb{R})$, which follows simply from the fact $\hat{f}$ is a bounded and continuous function for every $f \in L^{\Phi}(\mathbb{R})$. Indeed, if $C \subset  \widehat{\mathbb{R}} $ is weakly closed and $p \notin C$, then there exists  $0 \neq f \in \mathcal{C}_c^{\infty}(\mathbb{R})$ such that it is supported in $\left(p-\frac{\min \{|p-0|, d(p,C)\}}{2},p+\frac{\min \{|p-0|, d(p,C)\}}{2}\right) \subset \widehat{\mathbb{R}} \backslash C$ and $f(p) \neq 0 $. Thus $g=\mathcal{F}^{-1}[f]$, the inverse Fourier transform of $f$, is in $  L^{\Phi}(\mathbb{R})$ such that $ \hat{g} \equiv 0$ on $C$ and $\hat{g}(p) \neq 0$, proving that $L^{\Phi}(\mathbb{R})$ is a regular Banach algebra.
	\end{proof}

	We observe that a certain behavior of the corresponding Orlicz function around $0$ ensures a characterization of the functions whose all translates are complete like $L^1(\mathbb{R})$. 
	Since $x \prec_{\infty} \Phi_1$ for any Orlicz function $\Phi_1$, so Theorem \ref{embedding_result} ensures that $L^{\Phi}(\mathbb{R}) \subseteq L^1(\mathbb{R})$ if and only if $\lim\limits_{x \to 0} \frac{\Phi(x)}{x}>0$. Thus we have the following.
	
	\begin{theorem}\label{theorem_characterization_completeness_>0}
		Let $\Phi$ be an Orlicz function such that $\Phi \in \Delta_2$. Let $\lim\limits_{x \to 0}\frac{\Phi(x)}{x}>0$. Then all translates of a function $f \in L^{\Phi}(\mathbb{R})$ are complete if and only if its Fourier transform $\hat{f}(\zeta) \neq 0$ for all $\zeta \in \mathbb{R}$. 
	\end{theorem}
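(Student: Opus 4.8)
The plan is to carry out, in $L^{\Phi}(\mathbb{R})$, the Gelfand-theoretic proof of the Wiener Tauberian theorem; all the structural ingredients needed for it are now in place. Throughout set $M_f:=\overline{\operatorname{span}\{\tau_{\lambda}f:\lambda\in\mathbb{R}\}}$ and recall that $\widehat{\tau_{\lambda}f}(\zeta)=e^{i\lambda\zeta}\hat f(\zeta)$. Since $\lim_{x\to0}\Phi(x)/x>0$, convexity of $\Phi$ gives $\Phi(x)\ge cx$ for all $x\ge0$ with $c=\lim_{x\to0}\Phi(x)/x$, so Theorem~\ref{embedding_result} (or this direct estimate) yields the continuous inclusion $L^{\Phi}(\mathbb{R})\hookrightarrow L^{1}(\mathbb{R})$; in particular each character $\nu_{\zeta}(g)=\hat g(\zeta)$, $\zeta\in\mathbb{R}$, is a continuous linear functional on $L^{\Phi}(\mathbb{R})$, since $\abs{\nu_{\zeta}(g)}\le\|g\|_{1}\le C\|g\|_{L^{\Phi}}$. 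I would then treat the two implications separately.

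\emph{Sufficiency (the substantive direction).} Suppose $\hat f(\zeta)\neq0$ for every $\zeta\in\mathbb{R}$. The space $M_f$ is closed and translation invariant, hence by Lemma~\ref{translation_invariant_ideal} it is a closed ideal of the convolution Banach algebra $L^{\Phi}(\mathbb{R})$. If $M_f$ were a proper ideal, then Lemma~\ref{proper_maximal} would place it inside a regular maximal ideal of $L^{\Phi}(\mathbb{R})$; but, by the identification of the maximal ideal space of $L^{\Phi}(\mathbb{R})$ with $\widehat{\mathbb{R}}$ and of the Gelfand transform with the Fourier transform, every regular maximal ideal has the form $\ker\nu_{\zeta_0}=\{g\in L^{\Phi}(\mathbb{R}):\hat g(\zeta_0)=0\}$ for some $\zeta_0\in\mathbb{R}$. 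Then $f\in M_f\subseteq\ker\nu_{\zeta_0}$ would force $\hat f(\zeta_0)=\nu_{\zeta_0}(f)=0$, contradicting the hypothesis. Hence $M_f=L^{\Phi}(\mathbb{R})$, i.e.\ the translates of $f$ are complete.

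\emph{Necessity.} Conversely, suppose $\hat f(\zeta_0)=0$ for some $\zeta_0\in\mathbb{R}$. Each translate satisfies $\nu_{\zeta_0}(\tau_{\lambda}f)=e^{i\lambda\zeta_0}\hat f(\zeta_0)=0$, so every finite linear combination of translates of $f$ lies in $\ker\nu_{\zeta_0}$, and since $\nu_{\zeta_0}$ is continuous this closed subspace contains $M_f$. To see that $\ker\nu_{\zeta_0}$ is proper, pick $\varphi\in\mathcal{C}_c^{\infty}(\mathbb{R})$ with $\varphi(\zeta_0)\neq0$; then $h:=\mathcal{F}^{-1}[\varphi]$ is a Schwartz function, so $h\in L^{1}(\mathbb{R})\cap L^{\infty}(\mathbb{R})\subseteq L^{\Phi}(\mathbb{R})$ (the inclusion following from $\Phi(0)=0$ and convexity, which bound $\int_{\mathbb{R}}\Phi(\abs{h}/a)\,dx$ for $a=\|h\|_{\infty}$), while $\nu_{\zeta_0}(h)=\varphi(\zeta_0)\neq0$. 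Therefore $M_f\subseteq\ker\nu_{\zeta_0}\subsetneq L^{\Phi}(\mathbb{R})$, so the translates of $f$ are not complete.

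I expect no serious obstacle inside the proof of the theorem itself: the real work has already been done in the preparatory results — the identification of the maximal ideal space, the regularity and semisimplicity of $L^{\Phi}(\mathbb{R})$, and Lemma~\ref{translation_invariant_ideal}. The two points that still need attention are (i) the continuity of the characters $\nu_{\zeta}$ on $L^{\Phi}(\mathbb{R})$, which is precisely where the hypothesis $\lim_{x\to0}\Phi(x)/x>0$ (equivalently $L^{\Phi}(\mathbb{R})\subseteq L^{1}(\mathbb{R})$) enters, and (ii) exhibiting a concrete element of $L^{\Phi}(\mathbb{R})$ outside $\ker\nu_{\zeta_0}$ in order to conclude properness in the necessity direction.
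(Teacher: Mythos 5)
Your proof is correct, and the sufficiency direction coincides with the paper's: both run $M_f$ through Lemma \ref{translation_invariant_ideal}, Lemma \ref{proper_maximal}, and the identification of the maximal ideal space with $\widehat{\mathbb{R}}$. Where you genuinely diverge is in the necessity direction. The paper argues by transference: since $L^{\Phi}(\mathbb{R})$ is dense in $L^{1}(\mathbb{R})$ with $\|\cdot\|_{1}\lesssim\|\cdot\|_{L^{\Phi}}$, completeness of the translates in $L^{\Phi}(\mathbb{R})$ implies completeness in $L^{1}(\mathbb{R})$, and then the classical Wiener--Tauberian theorem forces $\hat f$ to be nonvanishing. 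You instead argue directly: if $\hat f(\zeta_0)=0$, every translate lies in $\ker\nu_{\zeta_0}$, which is closed because the embedding $L^{\Phi}(\mathbb{R})\hookrightarrow L^{1}(\mathbb{R})$ makes $\nu_{\zeta_0}$ continuous, and proper because a suitable Schwartz function lies in $L^{\Phi}(\mathbb{R})$ with $\hat h(\zeta_0)\neq0$ (your verification that $L^{1}\cap L^{\infty}\subseteq L^{\Phi}$ via convexity and $\Phi(0)=0$ is sound, modulo the standard convention that $\Phi$ is finite somewhere). Your route is more self-contained --- it does not invoke Wiener's theorem at all and avoids having to justify density of $L^{\Phi}(\mathbb{R})$ in $L^{1}(\mathbb{R})$ --- while the paper's route is shorter on the page because it outsources the work to the classical theorem. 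Both are valid; neither has a gap.
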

	\begin{proof}
		First, suppose that all the translates of $f$ are complete in $L^{\Phi}(\mathbb{R})$. Since $L^{\Phi}(\mathbb{R})$ is dense in $L^1(\mathbb{R})$ with $\|\cdot\|_{L^1} \leq \|\cdot\|_{L^{\Phi}}$, so a density argument imply that all translates of $f$ are also complete in $L^1(\mathbb{R})$. For if let $g \in L^1(\mathbb{R})$. Then there exists $h \in L^{\Phi}(\mathbb{R})$ such that $\|g-h\|_{L^1} < \epsilon/2$. Also, there exists $\tilde{f} \in \mbox{ span} \{ \tau_{\lambda}f\}_{\lambda \in \mathbb{R}}$ such that $\|h-\tilde{f}\| _{L^{\Phi}}< \epsilon /2$. 
		\begin{equation*}
			\|g-\tilde{f}\|_{L^1} \leq \|g-h\|_{L^1}+ \|h-\tilde{f}\| _{L^1} < \epsilon/2 + \epsilon/2 = \epsilon.
		\end{equation*}
		Now since all translates of $f$ are complete in $L^1(\mathbb{R})$, so by Wiener-Tauberian theorem we must have $\hat{f}\neq 0$ everywhere.
		
		Now suppose $f \in L^{\Phi}(\mathbb{R})$ such that $\hat{f}(\zeta)\neq 0$ everywhere. Note that $M_f=\overline{\text{span}\{ \tau_{\lambda}f \}}_{\lambda \in \mathbb{R}}$ is a closed translation-invariant subspace of $L^{\Phi}(\mathbb{R})$, and thus by Lemma \ref{translation_invariant_ideal} is a closed ideal. If $M_f \neq L^{\Phi}(\mathbb{R})$, then by Lemma \ref{proper_maximal}, it is contained in a regular maximal ideal. So there exists $\zeta \in \widehat{\mathbb{R}}$ such that $\hat{f}(\zeta)=0$ which is a contradiction to the choice of $f$. 
	\end{proof}

	Now as a consequence of Theorem \ref{theorem_characterization_completeness_>0}, we identify those discrete $\Lambda \subseteq \mathbb{R}$ such that there exists a function in these Orlicz spaces whose $\Lambda$-translates are complete in $L^{\Phi}(\mathbb{R})$. This characterization is based on the Beurling-Malliavin Density theorem \cite{olevskiiulanovskii16} which shows that $D_{BM}(\Lambda)$ is actually a multiple of the spectral radius $R(\Lambda)$ of $\Lambda$, i.e.,
	$$\pi D_{BM}(\Lambda) = R(\Lambda):= \sup \{ r>0 \text{ such that } E(\Lambda) \text{ is dense in } L^2(-r,r) \},$$
	where $$E(\Lambda):= span \{ e^{i \lambda \cdot} : \lambda \in \Lambda \}$$ is the set of all trigonometric polynomials with frequencies from $\Lambda$. 
	
	\begin{corollary}\label{corollary_discrete_translates}
		Let $\Phi \in \Delta_2$ be an Orlicz function such that $\lim\limits_{x \to 0}\frac{\Phi(x)}{x}>0$ and $ \Lambda \subseteq \mathbb{R}$ be discrete. There exists a function $f \in L^{\Phi}(\mathbb{R})$ whose $\Lambda$-translates are complete exactly when the Beurling-Mallivian density $D_{{BM}}(\Lambda)$ is infinite.
	\end{corollary}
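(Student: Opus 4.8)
The plan is to establish the two implications separately, relying on Theorem~\ref{theorem_characterization_completeness_>0} together with the Beurling--Malliavin density theorem for the ``only if'' part and on Theorem~\ref{inclusion_result} together with the known characterization in $L^1(\mathbb{R})$ \cite{brunaolevskii2006} for the ``if'' part. Two consequences of the hypothesis $c:=\lim_{x\to0}\Phi(x)/x>0$ will be used repeatedly: first, $L^\Phi(\mathbb{R})\subseteq L^1(\mathbb{R})$, so that $\hat f$ is a well-defined bounded continuous function for every $f\in L^\Phi(\mathbb{R})$; and second, the complementary function $\Psi$ vanishes on $[0,c]$ (indeed $\Psi(y)=0$ exactly when $y\le\inf_{x>0}\Phi(x)/x$), whence $L^\infty(\mathbb{R})\subseteq L^\Psi(\mathbb{R})$. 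By Theorem~\ref{duality_theorem} we have $(L^\Phi)^*=L^\Psi$, so by Hahn--Banach the $\Lambda$-translates of $f$ fail to be complete in $L^\Phi(\mathbb{R})$ precisely when there is a nonzero $h\in L^\Psi(\mathbb{R})$ with $\int_{\mathbb{R}}f(x-\lambda)h(x)\,dx=0$ for all $\lambda\in\Lambda$.

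For the ``if'' direction, suppose $D_{BM}(\Lambda)=\infty$. Taking $\Phi_0(x)=x$, which is an Orlicz function in $\Delta_2$ with $L^{\Phi_0}(\mathbb{R})=L^1(\mathbb{R})$, the characterization of discrete translates in $L^1(\mathbb{R})$ \cite{brunaolevskii2006} provides an $f_0\in L^{\Phi_0}(\mathbb{R})$ whose $\Lambda$-translates are complete in $L^{\Phi_0}(\mathbb{R})$. Since $\Phi\in\Delta_2$ is real-valued, convexity with $\Phi(0)=0$ gives $\Phi(x)\le\Phi(1)\,x$ for $0\le x\le1$, i.e. $\Phi\prec_0\Phi_0$; hence Theorem~\ref{inclusion_result} yields a $g\in L^\Phi(\mathbb{R})$ whose $\Lambda$-translates are complete in $L^\Phi(\mathbb{R})$.

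For the ``only if'' direction I argue the contrapositive: assume $D_{BM}(\Lambda)<\infty$ and let $f\in L^\Phi(\mathbb{R})$ be arbitrary; I must show that the $\Lambda$-translates of $f$ are not complete. If $\hat f$ vanishes somewhere on $\mathbb{R}$, then already all translates of $f$ fail to be complete by Theorem~\ref{theorem_characterization_completeness_>0}, so I may assume $\hat f(\zeta)\ne0$ for every $\zeta\in\mathbb{R}$. By the Beurling--Malliavin density theorem, $R(\Lambda)=\pi D_{BM}(\Lambda)<\infty$, so fix $r>R(\Lambda)$; then $E(\Lambda)$ is not dense in $L^2(-r,r)$, and (since density there is unchanged under complex conjugation) there is a nonzero $\psi\in L^2(-r,r)$ with $\int_{-r}^r\psi(\zeta)e^{i\lambda\zeta}\,d\zeta=0$ for all $\lambda\in\Lambda$. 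Put $F:=\mathcal{F}^{-1}[\psi\chi_{[-r,r]}]$; this is a nonzero band-limited function, hence $F\in L^2(\mathbb{R})\cap L^\infty(\mathbb{R})$, and $F(\lambda)=0$ for every $\lambda\in\Lambda$. Writing $\check f(x):=f(-x)$, so that $\widehat{\check f}$ is continuous and nowhere zero, define $h$ by $\hat h:=\hat F/\widehat{\check f}$ on $[-r,r]$ and $\hat h:=0$ off $[-r,r]$; because $\widehat{\check f}$ is bounded below on the compact interval $[-r,r]$ we get $\hat h\in L^2(\mathbb{R})$ supported in $[-r,r]$, so $h$ is again a nonzero band-limited function and therefore $h\in L^2(\mathbb{R})\cap L^\infty(\mathbb{R})\subseteq L^\Psi(\mathbb{R})$. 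By H\"older's inequality the integral $\int_{\mathbb{R}}f(x-\lambda)h(x)\,dx$ converges and equals $(\check f*h)(\lambda)$; since $\widehat{\check f*h}=\widehat{\check f}\,\hat h=\hat F$, Fourier inversion gives $\check f*h=F$, hence $\int_{\mathbb{R}}f(x-\lambda)h(x)\,dx=F(\lambda)=0$ for all $\lambda\in\Lambda$. Thus $h$ is a nonzero annihilator and the $\Lambda$-translates of $f$ are not complete.

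I expect the real work to lie in the ``only if'' direction, and within it in the passage through the Fourier transform: one must (i) convert non-density of $E(\Lambda)$ in $L^2(-r,r)$ into a genuine band-limited $L^2$-function vanishing on $\Lambda$; (ii) divide by $\widehat{\check f}$ only over the compact interval $[-r,r]$, where it stays bounded below precisely because Theorem~\ref{theorem_characterization_completeness_>0} lets us reduce to $\hat f$ being nowhere zero; and (iii) check that the resulting band-limited $h$ lies in the dual Orlicz space $L^\Psi(\mathbb{R})$, which is exactly the point at which the hypothesis $\lim_{x\to0}\Phi(x)/x>0$ enters, via $L^\infty(\mathbb{R})\subseteq L^\Psi(\mathbb{R})$. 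The identity $\check f*h=F$ is a routine Fubini/Fourier-inversion computation given $\check f\in L^1(\mathbb{R})$ and $h\in L^\infty(\mathbb{R})$, and the ``if'' direction is a direct citation plus one application of Theorem~\ref{inclusion_result}.
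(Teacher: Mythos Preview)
Your proof is correct and follows essentially the same route as the paper: the ``if'' direction via \cite{brunaolevskii2006} and Theorem~\ref{inclusion_result}, and the ``only if'' direction by reducing to $\hat f\neq0$ via Theorem~\ref{theorem_characterization_completeness_>0}, then using Beurling--Malliavin to produce a band-limited function vanishing on $\Lambda$ and dividing on the Fourier side to obtain an annihilator in $L^\Psi$. Your justification that the annihilator lies in $L^\Psi$ through $L^\infty(\mathbb{R})\subseteq L^\Psi(\mathbb{R})$ (since $\Psi$ vanishes on $[0,c]$) is in fact cleaner than the paper's, which asserts the auxiliary function lies in $\mathcal{C}_c^\infty$ so that its inverse transform is Schwartz.
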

	\begin{proof}

		Suppose $D_{BM}(\Lambda)$ is not finite. By convexity, the Orlicz function $x$ majorizes every Orlicz function around $0$ and by Bruna et.al. \cite{brunaolevskii2006}, there exists a function $f \in L^1(\mathbb{R})$ such that its $\Lambda$-translates are complete. Thus, from Theorem \ref{inclusion_result}, there exists a function in any separable Orlicz space $L^{\Phi}(\mathbb{R})$ such that its $\Lambda$-translates are complete.

		We will show that whenever $D_{BM}(\Lambda)$ is finite, then for any function $f \in L^{\Phi}(\mathbb{R})$ with $\hat{f}\neq 0$ everywhere, there exists some $0 \neq g \in L^{\Psi}(\mathbb{R})$ which annihilates all $\Lambda$-translates of $f$.\\
		Suppose there exists $f \in L^{\Phi}(\mathbb{R})$ such that its $\Lambda$-translates are complete. Then by Theorem \ref{theorem_characterization_completeness_>0}, we must have that  $\hat{f}\neq 0$ everywhere on $\mathbb{R}$. Also, by the Beurling-Malliavin Density Theorem, there is an $a>0$ such that $E(\Lambda)$ is not dense in $L^2(-a,a)$, implying the existence of a $0 \neq h \in L^2(\mathbb{R}) \cap \mathcal{C}_c^{\infty}(\mathbb{R})$ such that $h(\lambda)=0$ for all $\lambda \in \Lambda$ and $\mbox{supp }(\hat{h}) \subseteq (-a,a)$. Thus we can define a function $0 \neq G \in \mathcal{C}_c^{\infty}(\mathbb{R})$ by
		\begin{equation*}
			G(\xi) = \begin{cases} 
				\frac{\hat{h}(\xi)}{\hat{f}(\xi)} & \text{ for }\xi \in supp(\hat{h}), \\
				0 & \text{ otherwise} .
			\end{cases}
		\end{equation*}
		Thus $g=\check{G} \in \mathcal{S}(\mathbb{R}) \subseteq L^{\Psi}(\mathbb{R})$ such that for any $\lambda \in \Lambda$, we have
		\begin{equation*}
			\langle \tau_{\lambda}f ,g \rangle\ = \int\limits_{\mathbb{R}}\hat{g}(\xi) \hat{f}(\xi) e^{i \lambda \xi} d \xi = \int\limits_{supp(\hat{h})}\hat{h}(\xi)e^{i \lambda \xi} d \xi = \sqrt{2 \pi} h(\lambda) = 0.
		\end{equation*}
		This shows that $\Lambda$-translates of $f$ are not complete in $L^{\Phi}(\mathbb{R})$, a contradiction.
	\end{proof}
	
	In particular, for uniformly discrete sets  $\Lambda$, we have $D_{BM}(\Lambda) < \infty$. So there does not exist any function such that its $\Lambda$-translates are complete in $L^{\Phi}(\mathbb{R})$ when the Orlicz function $\Phi \in \Delta_2$ satisfy $\lim\limits_{x \to 0}\frac{\Phi(x)}{x}>0$.\\
	
	\begin{remark}
		In contrast to the above discussion, when the Orlicz function $\Phi \in \Delta_2$ is such that $\lim\limits_{x \to 0}\frac{\Phi(x)}{x}=0$, then there does exist uniformly discrete set $\Lambda$ so that $L^{\Phi}(\mathbb{R})$ has a function whose $\Lambda$-translates are complete. In particular, we can take $\Lambda$ to be a very small perturbation of $\mathbb{Z}$, i.e., 
		\begin{equation*}
			\begin{split}
				\Lambda = \{ n+r_n\}_{n \in \mathbb{Z}}, \hspace{1.8in}\\
				0 < |r_n| \leq \gamma^{|n|} \text{ for all } n \in \mathbb{Z} \text{ and some } 0 < \gamma < 1. 
			\end{split}
		\end{equation*}
		It holds by \cite[Proposition 3.1]{olevskiiulanovskii18} since by the given conditions, the space $L^{\Phi}(\mathbb{R})$ is a separable symmetric space which is not a subspace of $L^1(\mathbb{R})$. \\
	\end{remark}

	Lastly, we show that whenever the Orlicz function $\Phi \in \Delta_2$ satisfies $\lim\limits_{x \to 0}\frac{\Phi(x)}{x}=0$, then all translates of any simple step function of the form $\chi_{[0,h]}$ for some $h>0$ are complete in the Orlicz space. The proof lies in the construction given by Agnew in \cite{agnew1945I} and a simple observation regarding the norm of a characteristic function in the Orlicz space $L^{\Phi}(\mathbb{R})$.

	Agnew \cite{agnew1945I} proved that all translates of a simple step function are dense in $L^p(\mathbb{R})$ for any $p>1$. The main idea in both the papers was the approximation of a dense class of functions via the respective choice in such a way that approximation error is exactly a step function of the form $\frac{\chi_{[0,nd]}}{n}$. Now as $p>1$, the norm $\| \frac{\chi_{[0,nd]}}{n}\|_p$ can be made arbitrarily small by a suitable large choice of $n$. We observed that for a general Orlicz function $\Phi$, the property that $\lim\limits_{x \to 0}\frac{\Phi(x)}{x}=0$ is sufficient to ensure that the same thing holds if we replace $\|\cdot\|_p$-norm by $\|\cdot \|_{L^{\Phi}}$-norm.
	
	\begin{lemma}\label{main_lemma_step_tent}
		Let $\Phi$ be an Orlicz function such that $\lim\limits_{x \to 0}\frac{\Phi(x)}{x}=0$. For every $\epsilon, d >0$, there exists $n_{\epsilon,d} \in \mathbb{N}$ such that 
		\begin{equation*}
			\left\| \frac{\chi_{[0,nd)}}{n} \right\|_{L^{\Phi}} < \epsilon ~~~~\mbox{ for all } n \geq n_{\epsilon,d}.
		\end{equation*}
	\end{lemma}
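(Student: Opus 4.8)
The plan is to compute the Luxemburg modular of $\chi_{[0,nd)}/n$ explicitly and then feed in the hypothesis on $\Phi$ near the origin directly. Since $\chi_{[0,nd)}/n$ takes the constant value $1/n$ on an interval of length $nd$ and vanishes elsewhere, for every $a>0$ one has
\[
\int_{\mathbb{R}} \Phi\!\left( \frac{1}{a}\cdot\frac{\chi_{[0,nd)}(x)}{n} \right) dx \;=\; nd\,\Phi\!\left( \frac{1}{na} \right).
\]
By the definition of the Luxemburg norm, to conclude $\bigl\| \chi_{[0,nd)}/n \bigr\|_{L^{\Phi}} < \epsilon$ it is enough to exhibit one value $a \in (0,\epsilon)$ for which the right-hand side is $\le 1$; I would use $a = \epsilon/2$, so the goal becomes the single inequality $nd\,\Phi\bigl( 2/(n\epsilon) \bigr) \le 1$ for all large $n$.

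Next I would invoke $\lim_{x\to 0}\Phi(x)/x = 0$. Fixing $\eta := \epsilon/(2d) > 0$, there is a $\delta > 0$ with $\Phi(x) \le \eta x$ for all $0 < x \le \delta$; note this automatically makes $\Phi$ finite on $(0,\delta]$, so the modular computation above is legitimate. Choosing $n_{\epsilon,d} \in \mathbb{N}$ with $n_{\epsilon,d} > 2/(\epsilon\delta)$, for every $n \ge n_{\epsilon,d}$ the point $2/(n\epsilon)$ lies in $(0,\delta)$, whence
\[
nd\,\Phi\!\left( \frac{2}{n\epsilon} \right) \;\le\; nd \cdot \eta \cdot \frac{2}{n\epsilon} \;=\; \frac{2d\eta}{\epsilon} \;=\; 1 ,
\]
and therefore $\bigl\| \chi_{[0,nd)}/n \bigr\|_{L^{\Phi}} \le \epsilon/2 < \epsilon$, which is the assertion.

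I do not expect a genuine obstacle here; the only thing needing a moment's care is calibrating the threshold $\eta$ in terms of $d$ (and the harmless factor $2$) so that the modular estimate lands at $\le 1$ rather than merely bounded, together with the observation that the hypothesis forces $\Phi$ to be finite near $0$. If one wishes to bypass the $\epsilon/2$ device, one can instead note directly from the displayed identity (with $a=\epsilon$) that the modular equals $nd\,\Phi(1/(n\epsilon)) = \frac{d}{\epsilon}\cdot\frac{\Phi(t)}{t}$ with $t=1/(n\epsilon)\to 0$, hence tends to $0$, and monotonicity of the modular in $a$ then gives $\bigl\|\chi_{[0,nd)}/n\bigr\|_{L^{\Phi}} \to 0$ as $n\to\infty$, which is even slightly stronger than the stated lemma.
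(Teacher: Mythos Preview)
Your argument is correct and is essentially the same as the paper's. The paper phrases the computation via the explicit formula $\left\|\chi_{[0,nd)}\right\|_{L^{\Phi}}=\bigl[\Phi^{-1}(1/(nd))\bigr]^{-1}$ and then shows $1/(n\epsilon)<\Phi^{-1}(1/(nd))$, which is exactly your modular inequality $nd\,\Phi\bigl(1/(n\epsilon)\bigr)<1$ read through $\Phi^{-1}$; your use of $a=\epsilon/2$ is just a harmless device to turn $\le$ into $<$.
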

	\begin{proof}
		Since $\lim\limits_{x \to 0}\frac{\Phi(x)}{x}=0$, so there exists $\delta>0$ such that 
		\begin{equation}\label{EQ:1}
			\frac{\Phi(x)}{x}< \frac{\epsilon}{d} \mbox{ for all }0<  x < \delta.
		\end{equation} 
		Also, we know that $\frac{1}{n} \to 0$, so we can choose $n\in \mathbb{N}$ such that 
		\begin{equation*}
			\frac{1}{n}< \epsilon \delta \mbox{ for all } n \geq n_{\epsilon,d}.
		\end{equation*}
		Thus for all $n \geq n_{\epsilon,d}$, we have
		\begin{equation*}
			\frac{\Phi\left(\frac{1}{n \epsilon}\right)}{\frac{1}{n \epsilon}}< \frac{\epsilon}{d},
		\end{equation*}
		implying that
		\begin{equation*}
			\frac{1}{n \epsilon} < \Phi^{-1} \left(\frac{1}{nd}\right),
		\end{equation*}
		which in turn gives that
		\begin{equation*}
			\left\| \frac{\chi_{[0,nd)}}{n} \right\|_{L^{\Phi}}  = \frac{1}{n} \left[ \Phi^{-1}\left(\frac{1}{nd}\right) \right]^{-1} < \epsilon.
		\end{equation*}
		This completes the proof of the lemma.
	\end{proof}

	Now by taking the same construction as in \cite{agnew1945I} and combining with Lemma \ref{main_lemma_step_tent}, we get the desired completeness property:
	\begin{theorem}\label{theorem_completeness_step_tent}
		Let $\Phi$ be an Orlicz function such that $\Phi \in \Delta_2$ and $\lim\limits_{x \to 0}\frac{\Phi(x)}{x}=0$. Then all translates of a simple step function are complete in $L^{\Phi}(\mathbb{R})$. 
	\end{theorem}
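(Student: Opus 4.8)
The plan is to follow Agnew's construction from \cite{agnew1945I} essentially verbatim, replacing only the final step — where one bounds the $L^p$-norm of a ``leftover'' step function — by the bound on its Luxemburg norm furnished by Lemma \ref{main_lemma_step_tent}. Write $s=\chi_{[0,h]}$ for the step function (the case of a general finite linear combination of characteristic functions of intervals reduces to this by linearity and translation-invariance of the closed span), and set $M:=\overline{\operatorname{span}}\{\tau_\lambda s\}_{\lambda\in\mathbb R}$. Since the Luxemburg norm is translation-invariant, $M$ is a closed translation-invariant subspace; and since $\Phi\in\Delta_2$, compactly supported step functions are dense in $L^\Phi(\mathbb R)$ (the standard companion of Theorem \ref{separability}). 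Hence it suffices to prove $\chi_{[0,c]}\in M$ for every $c>0$. Writing $c=mh+r$ with $m\in\mathbb N_0$ and $0\le r<h$, and using $\chi_{[0,mh]}=\sum_{j=0}^{m-1}\tau_{jh}s$ together with translation-invariance of $M$, this in turn reduces to the case $0<c<h$.

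So fix $0<c<h$. First I would record the identity
\[
e:=s-\tau_c s=\chi_{[0,c]}-\tau_h\chi_{[0,c]},
\]
valid because, for $c<h$, both sides equal $\chi_{[0,c)}-\chi_{[h,h+c)}$. Thus $e$, and every translate of $e$, lies in $\operatorname{span}\{\tau_\lambda s\}$. Telescoping gives $\sum_{k=0}^{K-1}\tau_{kh}e=\chi_{[0,c]}-\tau_{Kh}\chi_{[0,c]}$, so that
\[
P_n:=\frac1n\sum_{K=1}^{n}\sum_{k=0}^{K-1}\tau_{kh}e=\chi_{[0,c]}-\frac1n\sum_{K=1}^{n}\tau_{Kh}\chi_{[0,c]}
\]
belongs to $\operatorname{span}\{\tau_\lambda s\}_{\lambda\in\mathbb R}$. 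The point is that $\chi_{[0,c]}-P_n=\frac1n\sum_{K=1}^{n}\chi_{[Kh,Kh+c]}$, and since $c<h$ the intervals $[Kh,Kh+c]$, $K=1,\dots,n$, are pairwise disjoint; hence $\chi_{[0,c]}-P_n$ equals $\tfrac1n$ on a set of measure $nc$ and vanishes elsewhere. As the Luxemburg norm of a function of the form $a\chi_E$ depends only on $a$ and $|E|$, we obtain
\[
\bigl\|\chi_{[0,c]}-P_n\bigr\|_{L^\Phi}=\Bigl\|\tfrac1n\chi_{[0,nc)}\Bigr\|_{L^\Phi},
\]
and Lemma \ref{main_lemma_step_tent}, applied with $d=c$, makes the right-hand side $<\varepsilon$ for all large $n$. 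Therefore $\chi_{[0,c]}\in M$; running this over all $c$ and invoking the density reduction above yields $M=L^\Phi(\mathbb R)$.

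The only substantive step is the last estimate, and the obstacle it overcomes is precisely the one that makes the statement false for $L^1$: the accumulated error $\chi_{[0,c]}-P_n$, though of small height $\tfrac1n$, is spread over a set whose measure $nc$ grows with $n$, so its norm is $\tfrac1n[\Phi^{-1}(1/(nc))]^{-1}$, and forcing this to $0$ is possible exactly under the hypothesis $\lim_{x\to0}\Phi(x)/x=0$ (for $\Phi(x)=x$ the quantity equals the constant $c$, which is Wiener's obstruction for $L^1$). Everything else — the telescoping identity, disjointness of the short intervals, density of step functions, and translation-invariance of the norm — is routine once $\Phi\in\Delta_2$ is assumed.
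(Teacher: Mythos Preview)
Your proof is correct and follows essentially the same plan as the paper's: reduce to approximating $\chi_{[0,c]}$ via density of step functions under $\Delta_2$, build an element of the span whose error is $\tfrac{1}{n}$ times a characteristic function of a set of measure $\sim n$, and finish with Lemma~\ref{main_lemma_step_tent}. Your telescoping/Ces\`aro organisation (with the preliminary reduction to $0<c<h$) is a tidy repackaging of Agnew's weighted sum that the paper quotes directly, but the substance is the same.
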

	\begin{proof}
		Since $\Phi \in \Delta_2$, so step functions are dense in $L^{\Phi}(\mathbb{R})$. Therefore, in order to show that all translates of some function are complete, it is enough to prove that we can arbitrarily approximate any characteristic function $g:=\chi_{[0,a]}$ for $a>0$.
		
		Let $f$ be a simple step function of width $b >0$, say $f=\chi_{[0,b]}$. Clearly, the function $\chi_{[0,nb]} \in \mbox{ span}\{\tau_{\lambda}f\}_{\lambda \in \mathbb{R}}$ for every $n \in \mathbb{N}$. Choose $m$ to be the least positive integer for which $mb>a$.
		
		Let $\epsilon>0$ and choose $n=n_{\epsilon,2mb}$ from Lemma \ref{main_lemma_step_tent}.
		
		If we define
		\begin{equation*}
			g_{\epsilon}(x)= \sum\limits_{k=0}^{n-1} \left[ \left(1-\frac{2k+1}{2n+1}\right)\chi_{[0,mb]}(x-kmb)-\left(1-\frac{2k+2}{2n+1}\right)\chi_{[0,mb]}(x-kmb-a)  \right],
		\end{equation*}
		then $g_{\epsilon} \in \mbox{ span}\{\tau_{\lambda}f\}_{\lambda \in \mathbb{R}}$ and
		\begin{equation*}
			|g-g_{\epsilon}|= 	\frac{1}{2n+1}\chi_{[0,nmb+a]}.
		\end{equation*}
		Therefore by Lemma \ref{main_lemma_step_tent} we have
		\begin{equation*}
			\|g-g_{\epsilon}\|_{L^{\Phi}}<  \epsilon .
		\end{equation*}
		This completes the proof of the theorem.
	\end{proof}

	In particular, for such $\Phi$, there exists $f \in L^{\Phi}(\mathbb{R})$ such that its Fourier transform $\hat{f}(\zeta) =0$ for some $\zeta \in \mathbb{R}$ and still its all translates complete in the Orlicz space $L^{\Phi}(\mathbb{R})$.\\
	
	\begin{remark}
		Lemma \ref{main_lemma_step_tent} can also be used to show completeness of all translates of any tent function of the form 
		\begin{equation*}
			f(x)=
			\begin{cases}
				a(b-|x|) & |x|\leq b \\
				0 & |x|>b \\
			\end{cases}
		\end{equation*}
		for some $a,b>0$ in the Orlicz space $L^{\Phi}(\mathbb{R})$ whenever $\lim\limits_{x \to 0}\frac{\Phi(x)}{x}=0$ with $\Phi \in \Delta_2$. It can be done similarly to the above proof, by using the construction in \cite{agnew1945II} where it is proved that all translates of any tent function are complete in $L^p(\mathbb{R})$ for any $p>1$. \\
	\end{remark}

	\section*{Acknowledgments}
	
	The second author acknowledges the Science and Engineering Research Board, Government of India, for the financial support through project no. MTR/2022/000383.\\

	\bibliographystyle{abbrv}

\end{document}